\newenvironment{cenv}{\begin{list}{}{%
      \setlength{\labelwidth}{1.5em}%
      \setlength{\leftmargin}{\labelwidth}%
      \addtolength{\leftmargin}{\labelsep}%
      \setlength{\listparindent}{0em}%
      \setlength{\topsep}{10pt}%
      \setlength{\itemsep}{5pt}%
      \setlength{\parsep}{0pt}%
    }
  }{
  \end{list}
}
\newcounter{claimcounter}
\newtheorem{theorem}{Theorem}
\newtheorem{lemma}{Lemma}
\let\oldrem\rem
\renewcommand{\rem}{\oldrem\normalfont}
\numberwithin{equation}{section}
\numberwithin{theorem}{section}
\numberwithin{lemma}{section}
 \newcommand{\DDD}{\mathcal{D}}
 \newcommand{\FFF}{\mathcal{F}}
 \newcommand{\LLL}{\mathcal{L}}
\newcommand{\MMM}{\ensuremath{\mathcal{M}}} 
 \newcommand{\PPP}{\mathcal{P}}
 \newcommand{\TTT}{\mathcal{T}}
 \newcommand{\XXX}{\mathcal{X}}
 \newcommand{\ZZZ}{\mathcal{Z}}
\renewcommand{\phi}{\varphi}
\renewcommand{\epsilon}{\varepsilon}
\newcommand{\ie}{i.e.\@\xspace}
\DeclareMathOperator{\tw}{tw}
\DeclareMathOperator{\mw}{mw}
\DeclareMathOperator{\lw}{lw}
\DeclareMathOperator{\pw}{pw}
\title{Cops, Robber and Medianwidth Parameters}
\author{Konstantinos Stavropoulos \\ \small{RWTH Aachen University} \\ \small{stavropoulos@informatik.rwth-aachen.de}}
\date{}
\begin{document}

\maketitle

\begin{abstract}
In previous work, we introduced \emph{median decompositions} of graphs, a generalisation of tree decompositions where a graph can be modelled after any median graph,
along with a hierarchy of $i$-medianwidth parameters $(\mw_i)_{i\geq 1}$ starting from
treewidth and converging to the clique number. 

We introduce another graph parameter based on the concept of median decompositions, to be called \emph{$i$-latticewidth} and denoted by $\lw_i$, 
for which we restrict the modelling median graph of a decomposition to be isometrically embeddable into the Cartesian product of $i$ paths. 
The sequence $(\lw_i)_{i\geq 1}$ gives rise to a hierarchy of parameters starting from pathwidth and converging to the clique number.
We characterise the $i$-latticewidth of a graph in terms of maximal intersections of bags of $i$ path decompositions of the graph. 

We study a generalisation of the classical \emph{Cops and Robber game}, where the robber plays against not just one, but $i$ cop players. 
Depending on whether the robber is visible or not, we show a direct connection to $i$-medianwidth or $i$-latticewidth, respectively.
\end{abstract}

\section{Introduction}\label{sec:intro}

The concept of modelling a graph after simpler ones has proven to be a powerful and fruitful 
approach in graph theory. For example, tree and path decompositions, where one models a graph after trees or paths, 
have been fundamental in the study of graph classes excluding a fixed minor~\cite{robertson2003graph}.
Their respective width parameters, \emph{treewidth} and \emph{pathwidth}, have been extensively studied in various contexts throughout the literature~\cite{reed1997tree,Bodlaender93,Bodlaender05}.

A median graph is a connected graph, 
such that for any three vertices $u,v,w$ there is exactly one vertex $x$ that lies simultaneously
on a shortest $(u,v)$-path, a shortest $(v,w)$-path and a shortest $(w,u)$-path. Examples of median graphs are trees, grids and the $i$-dimensional hypercube $Q_i$, for every $i\geq 1$. 

A subset $S$ of vertices of a graph is \emph{(geodesically) convex} if for every pair of vertices in $S$, all shortest paths between them only contain vertices in $S$.
Making use of the observation that a subset of the vertices of a tree is convex if and only if it induces a connected subgraph, 
the notion of median decompositions was introduced in~\cite{stavropoulos2015medianwidth}, 
which models a graph not only after trees or paths, but after any median graph. 
It was proven there that the corresponding width parameter \emph{medianwidth} $\mw(G)$ coincides with the clique number
$\omega(G)$ of a graph $G$.

Median decompositions broaden substantially the perspective with which we can view graphs:
they provide a means to see every graph as a multidimensional object.
Every median graph can be isometrically embedded into the Cartesian product of a finite number of trees.  
When we restrict to median decompositions whose underlying median graph must be isometrically embeddable into the Cartesian product of $i$ trees, 
the respective medianwidth parameter of $G$ is called $i$-medianwidth  and denoted by $\mw_i(G)$. 
Then $\mw_i(G)$ can be seen as the largest ``intersection'' of the best choice of $i$ tree decompositions of the graph $G$~\cite{stavropoulos2015medianwidth}. 
By their definition, the invariants $\mw_i$ form a non-increasing sequence starting from treewdith and converging to the clique number:
$$\tw(G)+1=\mw_1(G)\geq \mw_2(G)\geq \dots \geq \mw_{\infty}(G)=\omega(G).$$

Instead of considering median decompositions whose underlying median graph can be isometrically embedded into the Cartesian product of $i$ trees, in Section~\ref{sec:lat} we study
medianwidth parameters for which we consider median decompositions whose underlying median graph must be isometrically embeddable into the Cartesian product of $i$ \emph{paths}.
For $i\geq1$, the corresponding width parameters, to be called $i$-latticewidth $\lw_i(G)$, will give rise to a sequence converging to the clique number and starting from pathwidth this time around:
$$\pw(G)+1=\lw_1(G)\geq \lw_2(G)\geq \dots \geq \lw_{\infty}(G)=\omega(G).$$
As in the case of $\mw_i$, by considering complete multipartite graphs, we see that this hierarchy of the $\lw_i$ parameters has stronlgy distinguished levels:
for $i<i'$, graph classes of bounded $i'$-latticewidth can have unbounded $i$-latticewidth. 
Lastly, we provide a characterisation of $i$-latticewidth in terms of path decompositions: 
we prove that it corresponds to the largest ``intersection'' of the best choice of $i$ path decompositions of the graph.


A large variety of width parameters for graphs are characterised through so-called \emph{search games}, introduced by Parsons and Petrov in~\cite{parsons1978pursuit,parsons1978search,petrov1982problem}. 
A set of searchers and a fugitive move on a graph according to some rules specified by the game. The goal of the searchers is to capture the fugitive, whose goal is to avoid capture. 
Different variants of the rules according to which the searchers and the fugitive move, give rise to games that characterise related width parameters, 
often otherwise introduced and appearing in different contexts. These game characterisations provide a better understanding of the parameters. For a survey on search games, see~\cite{fomin2008annotated}.

Treewidth and pathwidth are known to be characterised by appropriate variations of the \emph{Cops and Robber game} --- sometimes seen as \emph{helicopter Cops and Robber game} in the literature. The game is played on a finite, undirected graph $G$ by the cop player, 
who controls $k$ cops, and the robber player. The robber stands on a vertex of $G$ and can run arbitrarily fast through a path of $G$ to any other vertex, 
as long as there are no cops standing on the vertices of the path. Each of the $k$ cops either stands on a vertex of $G$ or is in a helicopter in the air. 
The cop player tries to capture the robber by landing a cop with a helicopter on the vertex where the robber stands and the robber
tries never to be captured. The robber sees where each of the $k$ cops stands or if they are going to land on a vertex of $G$ and can move arbitrarily fast to another vertex to evade capture while
some of the cops are still in the air.

While the robber can always see the cops at any point of the game, there are two forms of the game with respect to the information available to the cop player. 
In the first variation, the cop player can see the robber at all times and tries to surround her in some corner of the graph. This version of the game characterises the treewidth of $G$
in the sense that the cop player has a winning strategy with at most $k$ cops if and only if $\tw(G) \leq k-1$~\cite{seymour1993graph}. 
In the second variation of the game, the robber is invisible to the cop player so he has to search the graph in a more methodical way. 
In this version of the game, the cop player can always win with at most $k$ cops if and only if $\pw(G)\leq k-1$~\cite{bienstock1991quickly,kirousis1985interval,lapaugh1993recontamination}.

In light of the interplay of $i$-medianwith with intersections of bags of tree decompositions and that of $i$-latticewidth with intersections of bags of path decompositions of the graph, 
we introduce the \emph{$i$-Cops and Robber game} which turns out to be closely connected to the respective medianwidth and latticewidth parameters. The robber player now plays against $i$ cop players
which need to cooperate in order to capture the robber with the least ``cooperation'' possible (to be explained later).
Every cop player has at his disposal a team of $|V(G)|$ cops, each of which can stand on a vertex or move with a helicopter in the air. 
Cop teams are ``undercover'' though, meaning that they are invisible to the other cop teams.

But the robber is very powerful: she can see all $i$ cop teams and how they move at all times and additionally, 
the only way that the robber can be fully caught is by having a cop of every team on the vertex currently 
occupied by the robber. 
Moreover, she has a way to restrict their movement by selecting each time a cop team that she allows to move, while
forcing all the other cop teams to remain still during said move. In other words, the cop teams move one at a time, with any order the robber prefers.

On the other hand, each cop team can also restrict the robber. If one cop player manages to somehow catch the robber (notice that the robber would still not be completely captured), 
then the cops of that team lock down on the vertices they currently occupy and they trap the robber in the following sense: from then on, she is allowed to only move to vertices occupied by said cop team and disabled from choosing that particular cop team to move again for the rest of the game. Moreover, when the robber allows a cop team to move, 
during the time the respective cop player moves some of his cops with helicopters to some other vertices, 
the robber can then move through a path of $G$ to any other vertex, as long as there are no cops of a team that has not trapped her yet standing on the vertices of the path.

The \emph{cooperation} of the cop players is the maximum number of vertices simultaneously occupied by a cop of every team at any point of the game. In case the cop players have
a winning strategy to always catch the robber with cooperation at most $k$, we say that \emph{$i$ cop players can search the graph with cooperation at most $k$}. Moreover, we say that the cop players
can capture the robber \emph{monotonely} if with each of their moves the robber space of available escape options always shrinks. 

We study two variations of this game as well: one where the robber is \emph{visible} and one where the robber is \emph{invisible} to each cop player. 
When the robber is visible (respectively \emph{invisible}),
we say that the cop players search the graph \emph{with vision} (respectively \emph{without vision}).

Note that for $i=1$ the game described above becomes the classical Cops and Robber game where only one cop player searches the graph, because in that case the cooperation degenerates to just being 
the size of the cop team the cop player can use. We already mentioned that $1$-medianwidth corresponds to treewidth and $1$-latticewidth corresponds to pathwidth, 
which are both characterised by the classical Cops and Robber game 
depending on the visibility of the robber. In Section~\ref{sec:game} we extend this connection between Cops and Robber games and width parameters, and 
we show that $i$ cop players \emph{with vision} can \emph{monotonely} search a graph $G$ with cooperation at most $k$ if and only if $\mw_i(G)\leq k$. Similarly,
we show that $i$ cop players \emph{without vision} can \emph{monotonely} search a graph $G$ with cooperation at most $k$ if and only if $\lw_i(G)\leq k$. 
To our knowledge, this is also the first instance of a search game played between a single fugitive player against a team of many search players connected to a width parameter of graphs.


\section{Median Graphs and Median Decompositions}\label{sec:prelim}

Our notation from graph theory is standard, we defer the reader to~\cite{Diestel05} for the background. For a detailed view on median graphs, the reader can refer to books
\cite{feder1995stable,imrich2000product,van1993theory} and papers~\cite{bandelt1983median,klavzar1999median}, or a general survey on metric graph theory and geometry~\cite{bandelt2008metric}.
In this paper, every graph we consider is finite, undirected and simple.

For $u,v \in V(G)$, a \emph{$(u,v)$-geodesic} is a shortest $(u,v)$-path. A path $P$ in $G$ is a \emph{geodesic} if there are vertices $u,v$ such that $P$ is a $(u,v)$-geodesic. 

The \emph{interval} $I(u,v)$ consists of all vertices lying on a $(u,v)$-geodesic, namely
$$I(u,v)=\{x\in V(G) \mid d(u,v)=d(u,x)+d(x,v)\}.$$ 

A graph $G$ is called \emph{median} if it is connected and 
for any three vertices $u,v,w \in V(G)$ there is a unique vertex $x$, called the \emph{median} of $u,v,w$, 
that lies simultaneously on a $(u,v)$-geodesic, $(v,w)$-geodesic and a $(w,u)$-geodesic. In other words, $G$ is median if $|I(u,v) \cap I(v,w) \cap I(w,u)|=1$, for every three vertices $u,v,w$.

A set $S\subseteq V(G)$ is called \emph{geodesically convex} or just \emph{convex} if for every $u,v\in S$, $I(u,v)\subseteq S$.
By definition, convex sets are connected. It is easy to see that the intersection of convex sets is again convex. 
Note that the induced subgraphs corresponding to convex sets of median graphs are also median graphs.

The \emph{$i$-dimensional hypercube} or \emph{$i$-cube} $Q_i$, $i\geq 1$, is the graph with vertex set $\{0, 1\}^i$,
two vertices being adjacent if the corresponding tuples differ in precisely one position.
The hypercubes are also the only regular median graphs~\cite{mulder1980n}.

The Cartesian product $G\Box H$ of graphs $G$ and $H$ is the graph with vertex set $V(G)\times V(H)$,
in which vertices $(a,x)$ and $(b,y)$ are adjacent whenever $ab\in E(G)$ and $x=y$, or $a=b$
and $xy\in E(H)$. The Cartesian product is associative and commutative with $K_1$ as its unit.
Note that the Cartesian product of $i$ copies of $K_2=Q_1$ is an equivalent definition of the $i$-cube $Q_i$.

In Cartesian products of median graphs, medians of vertices can be seen to correspond to the tuple of the medians in every factor of the product. 
The following lemma~is folklore.

\begin{lemma}\label{cartmed}
 Let $G=\Box_{i=1}^kG_i$, where $G_i$ is median for every $i=1,\ldots,k$. Then $G$ is also median, whose convex sets are precicely the sets $C=\Box_{i=1}^kC_i$,
 where $C_i$ is a convex subset of $G_i$.
\end{lemma}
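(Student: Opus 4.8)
The plan is to base the entire argument on two metric facts about Cartesian products: the additive distance formula and the factorisation of geodesic intervals it yields. First I would record, by a short induction on $k$ starting from the two-factor case, that for $u=(u_1,\dots,u_k)$ and $v=(v_1,\dots,v_k)$ in $G=\Box_{i=1}^k G_i$ one has
$$d_G(u,v)=\sum_{i=1}^k d_{G_i}(u_i,v_i).$$
Here ``$\geq$'' holds because every edge of $G$ changes exactly one coordinate, so any $(u,v)$-walk projects in each factor to a $u_i$--$v_i$ walk; ``$\leq$'' follows by concatenating geodesics coordinate by coordinate. The structural tool is then immediate: a vertex $x=(x_1,\dots,x_k)$ satisfies $d_G(u,x)+d_G(x,v)=d_G(u,v)$ iff $\sum_i\big(d_{G_i}(u_i,x_i)+d_{G_i}(x_i,v_i)\big)=\sum_i d_{G_i}(u_i,v_i)$, and since each summand is at least $d_{G_i}(u_i,v_i)$ by the triangle inequality, equality forces equality in every coordinate. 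Hence
$$I_G(u,v)=\Box_{i=1}^k I_{G_i}(u_i,v_i).$$

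With this factorisation the median property is short. $G$ is connected as a product of connected graphs. Given $u,v,w\in V(G)$, medianness of each factor yields a unique $m_i$ with $\{m_i\}=I_{G_i}(u_i,v_i)\cap I_{G_i}(v_i,w_i)\cap I_{G_i}(w_i,u_i)$. Intersecting the three factorised intervals coordinatewise gives $I_G(u,v)\cap I_G(v,w)\cap I_G(w,u)=\Box_{i=1}^k\{m_i\}=\{(m_1,\dots,m_k)\}$, a singleton, so $G$ is median.

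For the convex sets, one inclusion is a direct consequence of factorisation: if $C=\Box_{i=1}^k C_i$ with each $C_i$ convex and $u,v\in C$, then $I_G(u,v)=\Box_i I_{G_i}(u_i,v_i)\subseteq\Box_i C_i=C$, so $C$ is convex. The converse is where I expect the real work. Let $C$ be convex (we may assume it nonempty) and let $\pi_i$ denote projection to the $i$-th factor. The crucial mechanism is a single-coordinate swap: if $u,v\in C$, then the tuple $u'$ agreeing with $u$ except that $u'_i=v_i$ lies in $I_G(u,v)\subseteq C$, since $u'_i=v_i\in I_{G_i}(u_i,v_i)$ while $u'_j=u_j\in I_{G_j}(u_j,v_j)$ is an endpoint for $j\neq i$. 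Iterating this swap coordinate by coordinate, starting from any $w\in C$ and successively overwriting coordinate $i$ with a value $a_i\in\pi_i(C)$ realised by some witness in $C$, produces an arbitrary tuple of $\Box_i\pi_i(C)$ inside $C$; as the reverse inclusion is trivial, $C=\Box_i\pi_i(C)$ is a box. It remains to check that each $\pi_i(C)$ is convex: for $a,b\in\pi_i(C)$, fix any $w\in C$ and use the box property to form $u,v\in C$ that equal $w$ outside coordinate $i$ and carry $a,b$ there; then $I_G(u,v)=\{w_1\}\Box\cdots\Box I_{G_i}(a,b)\Box\cdots\Box\{w_k\}\subseteq C$, and projecting to the $i$-th coordinate gives $I_{G_i}(a,b)\subseteq\pi_i(C)$. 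Setting $C_i=\pi_i(C)$ then completes the characterisation. The main obstacle is precisely this converse direction, but once the interval factorisation is available it reduces to the swap argument plus the coordinatewise projection, neither of which is deep.
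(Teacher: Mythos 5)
Your proof is correct. The paper offers no proof of this lemma at all --- it is stated as folklore --- and your argument (distance additivity in the product, hence the interval factorisation $I_G(u,v)=\Box_{i=1}^k I_{G_i}(u_i,v_i)$, coordinatewise medians, and the single-coordinate swap showing a nonempty convex set equals the box of its projections, each projection being convex) is precisely the standard folklore argument, with every step, including the delicate converse direction for convex sets, checking out.
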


A graph $G$ is a \emph{convex amalgam} of
two graphs $G_1$ and $G_2$ (along $G_1\cap G_2$) if $G_1$ and $G_2$ constitute two intersecting induced convex subgraphs of
$G$ whose union is all of $G$.

A (necessarily induced) subgraph $H$ of a graph $G$ is a \emph{retract} of $G$, if there is a map $r:V(G)\rightarrow V(H)$ that maps each edge of $G$ to an edge of $H$,
and fixes $H$, \ie, $r(v) =v$ for every $v\in V(H)$. Median graphs are easily seen to be closed under retraction, 
and since they include the $i$-cubes, every retract of a hypercube is a median graph. The inverse is also true: 

\begin{theorem}\cite{bandelt1984retracts,isbell1980median,van1983matching}\label{retract}
 A graph $G$ is median if and only if it is the retract of a hypercube. Every median graph with more than two vertices is either a Cartesian
product or a convex amalgam of proper median subgraphs.
\end{theorem}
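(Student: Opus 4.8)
The plan is to establish the structural decomposition (the second sentence) first by means of convex splits, and then to derive the retract characterisation, whose nontrivial direction is ``median $\Rightarrow$ retract of a cube'', by induction on $|V(G)|$ using that decomposition. The reverse direction of the first sentence is already at hand: by Lemma~\ref{cartmed} every hypercube $Q_n=\Box_{i=1}^nK_2$ is median, and since median graphs are closed under retraction, any retract of a hypercube is median.

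For the decomposition I would first record that median graphs are bipartite (a shortest odd cycle would violate uniqueness of some median), so that for every edge $ab$ no vertex is equidistant from $a$ and $b$. Hence each edge $ab$ induces a split of $V(G)$ into the two \emph{halfspaces} $W_{ab}=\{x : d(x,a)<d(x,b)\}$ and $W_{ba}=\{x : d(x,b)<d(x,a)\}$. The key claim is that $W_{ab}$ and $W_{ba}$ are convex; this follows from the median axiom by a short argument showing that a geodesic with both endpoints in $W_{ab}$ cannot cross the edge cut separating the two halves. Writing $U_{ab}\subseteq W_{ab}$ and $U_{ba}\subseteq W_{ba}$ for the endpoints of the cut edges, one checks that $U_{ab}$ and $U_{ba}$ are themselves convex and are matched to one another by the cut edges.

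With this in hand the dichotomy is immediate. If for some edge the boundary is proper, say $U_{ab}\subsetneq W_{ab}$, then I claim $G$ is the convex amalgam of $G_1=G[W_{ab}]$ and $G_2=G[W_{ba}\cup U_{ab}]$ along $G[U_{ab}]$: both parts are convex (again by a geodesic-crossing argument), both are proper since $U_{ab}\neq W_{ab}$ and $W_{ba}\neq\emptyset$, their union is $V(G)$, and their intersection is exactly $U_{ab}\neq\emptyset$. If instead no edge has a proper boundary, then every split satisfies $U_{ab}=W_{ab}$ and $U_{ba}=W_{ba}$, so the cut edges form a perfect matching identifying $G[W_{ab}]$ with $G[W_{ba}]$; this exhibits $G\cong G[W_{ab}]\,\Box\,K_2$, a nontrivial Cartesian product whenever $|V(G)|>2$.

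Finally I would obtain ``median $\Rightarrow$ retract of a hypercube'' by induction on $|V(G)|$, the base cases $K_1$ and $K_2$ being cubes. If $G=G_1\Box G_2$ nontrivially, then by induction $G_i$ is a retract of some $Q_{n_i}$ via a retraction $r_i$, and $r_1\Box r_2$ is a retraction of $Q_{n_1}\Box Q_{n_2}=Q_{n_1+n_2}$ onto $G$. If $G$ is a convex amalgam of $G_1,G_2$ along $G_0=G_1\cap G_2$, each $G_i$ is a retract of a cube by induction, and the task is to glue the two retractions into one. I expect this gluing to be the main obstacle: the two embeddings must be reconciled along the common convex part $G_0$ so that the combined map still sends edges to edges and fixes $G$. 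The clean way to carry it out is to embed $G$ isometrically into $Q_m$, where $m$ is the number of distinct splits $\{W_{ab},W_{ba}\}$, by recording for each vertex on which side of every split it lies, and then to define the retraction by sending a cube vertex $q$ to a vertex of $G$ at minimum Hamming distance; the whole difficulty concentrates in showing, via the Helly property of the convex halfspaces, that this nearest vertex is unique and that flipping one coordinate of $q$ moves it by at most one edge, which is precisely what makes the map a retraction.
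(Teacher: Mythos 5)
Note first that the paper does not prove Theorem~\ref{retract} at all: it is imported from the literature \cite{bandelt1984retracts,isbell1980median,van1983matching}, so your proposal has to be judged on its own merits rather than against an in-paper argument. The decomposition half of your plan is essentially sound and is in fact Mulder's classical argument: you are right that bipartiteness must be derived directly from the median axiom (the paper states it as a corollary of this very theorem, so quoting it would be circular), the convexity of $W$-sets and $U$-sets is a standard fact the paper itself records, and your dichotomy works --- if $U_{ab}\subsetneq W_{ab}$ for some edge, $G$ is the convex amalgam of $G[W_{ab}]$ and $G[W_{ba}\cup U_{ab}]$ (the convexity of the latter does follow, since any geodesic crossing the cut $F_{ab}$ crosses it once and its $W_{ab}$-portion runs between vertices of the convex set $U_{ab}$), and otherwise any $\Theta$-class is a perfect matching exhibiting $G\cong G[W_{ab}]\,\Box\,K_2$.

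The genuine gap is your final construction for ``median $\Rightarrow$ retract of a hypercube'': the minimum-Hamming-distance map is not a retraction, and no tie-breaking fixes it. Take $P_3=a\text{--}b\text{--}c$ with its canonical embedding into $Q_2$, say $a=00$, $b=10$, $c=11$, and consider $q=01$. The nearest vertices to $q$ are $a$ and $c$, both at distance $1$ (so uniqueness already fails), yet any retraction in the paper's sense (edges to edges, fixing $G$) must send $q$ to a common neighbour of $r(00)=a$ and $r(11)=c$, i.e.\ to $b$ --- the vertex at Hamming distance $2$. So the nearest-point map is wrong even after breaking ties. The Helly property cannot rescue it: the traces on $G$ of the coordinate halfspaces containing $q$ are convex in $G$, but for $q\notin\phi(V(G))$ they need not pairwise intersect (in the example the two traces are $\{a\}$ and $\{c\}$), so Lemma~\ref{helly} gives no common point. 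The amalgam-gluing step you flagged as ``the main obstacle'' is precisely where the substance of the theorem lives; correct proofs handle it either via Mulder's convex expansion procedure, or via Bandelt's theorem that median-closed (majority-closed) subsets of hypercubes are retracts, or by reconciling the two retractions along $G_1\cap G_2$ using the gate map onto that convex set --- nearest-point \emph{inside the median graph}, where gates exist and are unique, not inside the ambient cube, where they do not.
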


A corollary of Theorem~\ref{retract} is that median graphs are bipartite graphs. 
On the other hand, not all bipartite graphs are median: the cycle graph on $2k$ vertices $C_{2k}$ is not median for $k\geq 3$,
because there always three vertices without a median; $K_{2,3}$ is also not median, since the three vertices of the one part have two medians, exactly the vertices of the other part.   

A graph $H$ is \emph{isometrically embeddable} into a graph $G$ if there is a mapping $\phi:V(H)\rightarrow V(G)$ such that $d_G(\phi(u), \phi(v))=d_H(u, v)$ for any vertices $u,v \in H$.
Isometric subgraphs of hypercubes are called \emph{partial cubes}. Retracts of graphs are isometric subgraphs, hence median graphs are partial cubes,
but not every partial cube is a median graph: $C_6$ is an isometric subgraph of $Q_3$, but not a median graph.

For a connected graph and an edge $ab$ of $G$ we denote
\begin{itemize} 
 \item $W_{ab}=\{v\in V(G) \mid d(v,a)<d(v,b)\},$
 \item $U_{ab}=W_{ab}\cap N_G(W_{ba}).$
\end{itemize}

\noindent
Sets of the graph that are $W_{ab}$ for some edge $ab$ will be called \emph{$W$-sets} and similarly we define \emph{$U$-sets}. If $U_{ab}=W_{ab}$ for some edge $ab$, 
we call the set $U_{ab}$ a peripheral set of the graph.
Note that if $G$ is a bipartite graph, then $V(G)=W_{ab}\cup W_{ba}$ and $W_{ab}\cap W_{ba}=\emptyset$ is true for any edge $ab$.  
If $G$ is a median graph, it is easy to see that $W$-sets and $U$-sets are convex sets of $G$. 

Edges $e=xy$ and $f=uv$ of a graph $G$ are in the \emph{Djokovic-Winkler} relation $\Theta$~\cite{djokovic1973distance,winkler1984isometric} if $d_G(x,u)+d_G(y,v)\neq d_G(x,v)+d_G(y,u)$. 
Relation $\Theta$ is reflexive and symmetric. If $G$ is bipartite, then $\Theta$ can be defined as follows: $e=xy$ and $f=uv$ are in relation $\Theta$ if $d(x,u)=d(y,v)$ and $d(x,v)=d(y,u)$. 
Winkler~\cite{winkler1984isometric} proved that on bipartite graphs relation $\Theta$ is transitive if and only if it is a partial cube and so, 
by Theorem \ref{retract} it is an equivalence relation on the edge set of every median graph, whose classes we call $\Theta$-classes. 

The following lemma summarises some properties of the $\Theta$-classes of a median graph:

\begin{lemma}\cite{imrich2000product}\label{theta}
 Let $G$ be a median graph and for an edge $ab$, let $F_{ab}=F_{ba}$ denote the set of edges between $W_{ab}$ and $W_{ba}$. Then the following are true:
 \begin{enumerate}
  \item $F_{ab}$ is a matching of $G$.
  \item $F_{ab}$ is a minimal cut of $G$.
  \item A set $F\subseteq E(G)$ is a $\Theta$-class of $G$ if and only if $F=F_{ab}$ for some edge $ab\in E(G)$. 
 \end{enumerate}

\end{lemma}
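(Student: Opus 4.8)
The plan is to exploit three facts that hold for every median graph $G$: it is bipartite, so $V(G)=W_{ab}\sqcup W_{ba}$ for every edge $ab$ (as noted in the excerpt); the sets $W_{ab}$ and $W_{ba}$ are convex, hence connected; and, by Theorem~\ref{retract} together with Winkler's result, $\Theta$ is an equivalence relation admitting the bipartite description $e\,\Theta\,f$ iff the two ``diagonal'' distances agree. I would first record the elementary distance facts everything rests on: for an edge $xy$ and any vertex $z$ one has $|d(z,x)-d(z,y)|=1$, and consequently $v\in W_{ab}$ forces $d(v,b)=d(v,a)+1$, while for adjacent $x\in W_{ab}$, $y\in W_{ba}$ a short parity argument gives $d(y,a)=d(x,a)+1$.

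For part (1), the matching property, which I expect to be the main obstacle, I would argue by contradiction using the median operation. Suppose two edges of $F_{ab}$ share an endpoint; up to swapping $a$ and $b$ we may assume some $x\in W_{ab}$ has two neighbours $y,y'\in W_{ba}$. The distance facts give $d(y,a)=d(y',a)=d(x,a)+1$, so $x$ lies on an $(a,y)$-geodesic and on an $(a,y')$-geodesic, i.e.\ $x\in I(a,y)\cap I(a,y')$. Since $y\neq y'$ are both neighbours of $x$ in a bipartite graph, $d(y,y')=2$ and so $x\in I(y,y')$. Hence $x$ lies in all three intervals $I(a,y)$, $I(a,y')$, $I(y,y')$ and is therefore the median of $a,y,y'$. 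But convexity of $W_{ba}$ gives $I(y,y')\subseteq W_{ba}$, so the median lies in $W_{ba}$, contradicting $x\in W_{ab}$. Thus no two edges of $F_{ab}$ meet, and it is a matching.

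For part (2), I would observe that $G-F_{ab}$ has no edge between $W_{ab}$ and $W_{ba}$, while each of these two nonempty sets (containing $a$ and $b$ respectively) is convex and hence induces a connected subgraph. Thus $G-F_{ab}$ has exactly two components. This makes $F_{ab}$ a minimal edge cut: any proper subset $F'\subsetneq F_{ab}$ omits some edge $e\in F_{ab}$, and since reinserting $e$ joins the two connected parts, $G-F'$ stays connected, so no proper subset of $F_{ab}$ separates $G$.

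For part (3), I would show directly that $F_{ab}$ coincides with the $\Theta$-class of $ab$, which settles both implications at once since $\Theta$ is an equivalence relation. Using the bipartite description of $\Theta$, for an edge $uv$ with $u\in W_{ab}$, $v\in W_{ba}$ the distance facts yield $d(u,a)=d(v,b)$ and $d(u,b)=d(v,a)$, so $uv\,\Theta\,ab$; conversely, if $u,v$ lay on the same side, say both in $W_{ab}$, substituting $d(u,b)=d(u,a)+1$ and $d(v,b)=d(v,a)+1$ into the two $\Theta$-equations forces $d(u,a)=d(u,a)+2$, a contradiction, and symmetrically for $W_{ba}$. Hence $uv\,\Theta\,ab$ precisely when $uv\in F_{ab}$, so $F_{ab}=[ab]_{\Theta}$ and part (3) follows in both directions.
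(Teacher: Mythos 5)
Your proof is correct, but there is nothing in the paper to compare it against: the paper does not prove this lemma, it imports it verbatim from Imrich and Klav\v{z}ar \cite{imrich2000product}. Judged on its own merits, your write-up is sound and self-contained relative to the paper's preliminaries, using only facts the paper makes available: median graphs are bipartite, so $V(G)=W_{ab}\cup W_{ba}$ disjointly for every edge $ab$; $W$-sets are convex, and convex sets are connected; medians are unique; and Winkler's bipartite formulation of $\Theta$ together with its transitivity on median graphs. Your key step for (1) --- if $x\in W_{ab}$ had two neighbours $y,y'\in W_{ba}$, then $x\in I(a,y)\cap I(a,y')\cap I(y,y')$ makes $x$ the median of $a,y,y'$, while convexity of $W_{ba}$ forces that median into $W_{ba}$ --- is exactly the classical argument, and the preliminary parity facts you record ($d(v,b)=d(v,a)+1$ for $v\in W_{ab}$, and $d(y,a)=d(x,a)+1$ for adjacent $x\in W_{ab}$, $y\in W_{ba}$) are all verified correctly. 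Part (2) is immediate once (1)'s setup gives the partition into two convex, hence connected, halves, and your minimality argument (any proper subset $F'\subsetneq F_{ab}$ leaves some cross-edge, which reconnects the two halves) is complete. In (3), identifying $F_{ab}$ with the $\Theta$-class $[ab]_\Theta$ is the right move and settles both implications at once via transitivity; the one potential pitfall --- that the bipartite condition $d(x,u)=d(y,v)$ and $d(x,v)=d(y,u)$ might depend on how the endpoints of $uv$ are labelled --- is harmless, since swapping $u$ and $v$ merely exchanges the two equations, and your substitution showing that two endpoints on the same side of the cut yield $d(u,a)=d(u,a)+2$ checks out.
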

 
Characterisations of median graphs in terms of \emph{expansions} can be seen in~\cite{mulder1978structure,mulder1980interval,mulder1990expansion}.

Concluding with the general properties of median graphs, a family of sets $\FFF$ on a universe $U$ has the \emph{Helly property}, 
if every finite subfamily of $\FFF$ with pairwise-intersecting sets, has a non-empty total intersection.

\begin{lemma}\cite{imrich2000product}\label{helly}
The convex sets of a median graph $G$ have the Helly property. 
\end{lemma}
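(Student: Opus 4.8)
The plan is to prove the Helly property by induction on the number $n$ of convex sets in the subfamily, with the heart of the argument being the case $n=3$, where I would exploit the median operation directly; the cases $n\leq 2$ are vacuous or immediate.

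First I would settle three pairwise-intersecting convex sets $C_1,C_2,C_3$. Choosing vertices $x_{12}\in C_1\cap C_2$, $x_{23}\in C_2\cap C_3$ and $x_{13}\in C_1\cap C_3$, I would let $m$ be the median of $x_{12},x_{23},x_{13}$. Since $m$ lies on an $(x_{12},x_{13})$-geodesic and both endpoints belong to the convex set $C_1$, convexity gives $m\in I(x_{12},x_{13})\subseteq C_1$; the symmetric arguments place $m\in I(x_{12},x_{23})\subseteq C_2$ and $m\in I(x_{13},x_{23})\subseteq C_3$. Hence $m\in C_1\cap C_2\cap C_3$, which establishes the three-set case and constitutes the conceptual core of the whole lemma.

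For the inductive step, assuming the property for every family of fewer than $n$ convex sets, I would take pairwise-intersecting convex sets $C_1,\dots,C_n$ and pass to the sets $D_i:=C_i\cap C_n$ for $i=1,\dots,n-1$. Each $D_i$ is convex, being an intersection of convex sets, and for $i\neq j$ we have $D_i\cap D_j=C_i\cap C_j\cap C_n$, which is non-empty by the already-established three-set case applied to the pairwise-intersecting triple $C_i,C_j,C_n$. Thus $D_1,\dots,D_{n-1}$ is a pairwise-intersecting family of $n-1$ convex sets, so by the induction hypothesis $\bigcap_{i=1}^{n-1}D_i\neq\emptyset$; since this intersection equals $\bigcap_{i=1}^{n}C_i$, the full family has a common vertex.

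The only delicate point is the three-set case, and there the subtlety lies in recognizing \emph{why} the median lands in all three sets: it is not an arbitrary common point of the three intervals that works, but precisely the median, whose defining membership in each of the three geodesic intervals---each spanned by two vertices that happen to lie in a common $C_\ell$---is exactly what convexity converts into membership in $C_\ell$. Once this is in place, the reduction to three sets inside the induction is routine, relying only on the stability of convexity under intersection.
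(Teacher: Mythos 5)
Your proof is correct: the median-based argument for three sets followed by induction via the intersections $D_i = C_i \cap C_n$ is exactly the classical proof of this lemma, which the paper does not reprove but imports by citation from the literature (Imrich--Klav\v{z}ar), where the same argument appears. Nothing is missing --- the base case correctly uses that the median of $x_{12},x_{23},x_{13}$ lies in all three intervals and hence, by convexity, in all three sets, and the reduction step correctly invokes closure of convexity under intersection and the already-settled three-set case.
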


A \emph{tree decomposition} $\DDD$ of a graph $G$ is a pair $(T,\ZZZ)$, where
$T$ is a tree and $\ZZZ=(Z_t)_{t\in V(T)}$ is a family of subsets of $V(G)$
(called bags) such that 
\begin{enumerate}[(i)]
 \item[(T1)] for every edge $uv\in E(G)$ there exists $t \in V(T)$ with $u,v \in Z_t$, 
 \item[(T2)] for every $v\in V(G)$, the set $Z^{-1}(v):=\{t\in V(T) \mid v\in Z_t\}$ is a non-empty connected subgraph (a subtree) of $T$.
\end{enumerate} 
The \emph{width} of a tree decomposition $\DDD=(T,\ZZZ)$ is the number $$\max\{|Z_t|-1 \mid t\in V(T)\}.$$
Let $\TTT^G$ be the set of all tree decompositions of $G$. The \emph{treewidth} $\tw(G)$ of $G$ is the least width of any tree decomposition of $G$, namely
$$\tw(G):=\min\limits_{\DDD\in \TTT^G}\max\{|Z_t|-1 \mid t\in V(T)\}.$$ 
When $T$ is a path $P$, we call $(P,\ZZZ)$ a path decomposition. Its width is defined as in tree decompositions. Let $\PPP^G$ be the set of all path decompositions of $G$. 
The \emph{pathwidth} $\pw(G)$ of $G$ is the least width of any path decomposition in $\PPP^G$.

A pair $(A,B)$ is a \emph{separation} of $G$ if $A\cup B=V(G)$ and $G$ has no edge between $A\setminus B$ and $B\setminus A$. For a separation $(A,B)$, we say that $A\cap B$ separates $A$ from $B$. Let the clique number $\omega(G)$ be the size of the 
largest complete subgraph of $G$. 

We assume familiarity with the basic theory of tree decompositions as in~\cite{Diestel05} or~\cite{reed1997tree}. In the next lemma, we briefly state some of the most important well-known properties of tree decompositions.

\begin{lemma}\label{twprop}
 Let $\DDD=(T,\ZZZ) \in \TTT^G$.
 \begin{enumerate}[(i)]
  \item For every $H\subseteq G$, the pair $(T, (Z_t\cap V(H))_{t\in T})$ is a tree decomposition of $H$, so that $\tw(H)\leq \tw(G)$.
  \item Any complete subgraph of $G$ is contained in some bag of $\DDD$, hence $\omega(G)\leq \tw(G)+1$.
  \item For every edge $t_1t_2$ of $T$, $Z_{t_1}\cap Z_{t_2}$ separates $W_1:=\bigcup_{t\in T_1} Z_t$ from $W_2:=\bigcup_{t\in T_2} Z_t$,
  where $T_1,T_2$ are the components of $T-t_1t_2$, with $t_1 \in T_1$ and $t_2 \in T_2$.
 \end{enumerate}

\end{lemma}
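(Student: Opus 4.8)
The final statement is Lemma~\ref{twprop}, which collects three standard properties of tree decompositions. Let me sketch a proof for each part.

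\begin{proof}[Proof proposal]
The plan is to verify each of the three properties directly from the defining axioms (T1) and (T2) of a tree decomposition.

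For part (i), I would take the pair $(T, (Z_t\cap V(H))_{t\in T})$ and check that it satisfies (T1) and (T2) for $H$. Axiom (T1) is immediate: if $uv\in E(H)\subseteq E(G)$, then some bag $Z_t$ contains both $u$ and $v$, so $Z_t\cap V(H)$ contains them too. For (T2), note that for $v\in V(H)$ the set $\{t \mid v\in Z_t\cap V(H)\}$ equals $\{t \mid v\in Z_t\}$, which is a non-empty subtree of $T$ by the corresponding property of $\DDD$. The width bound $\tw(H)\leq\tw(G)$ then follows since each new bag is a subset of the old one, so $|Z_t\cap V(H)|-1\leq|Z_t|-1$.

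For part (ii), the key tool is the Helly property for subtrees of a tree: a family of pairwise-intersecting subtrees has a common vertex. Given a complete subgraph $K$ of $G$, every pair of its vertices $u,v$ lies together in some bag by (T1), so the subtrees $Z^{-1}(u)$ and $Z^{-1}(v)$ intersect for each pair $u,v\in V(K)$. By the Helly property of subtrees, the subtrees $\{Z^{-1}(v)\}_{v\in V(K)}$ share a common node $t$, and then $V(K)\subseteq Z_t$. Hence $\omega(G)\leq\tw(G)+1$. The main point to be careful about here is invoking the Helly property for subtrees of a tree, which can either be cited or proved by a short induction on $|V(K)|$ using the fact that in a tree the intersection of pairwise-intersecting subtrees is nonempty.

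For part (iii), I would argue that $Z_{t_1}\cap Z_{t_2}$ separates $W_1$ from $W_2$. It suffices to show there is no edge of $G$ with one endpoint in $W_1\setminus W_2$ and the other in $W_2\setminus W_1$, and that $W_1\cup W_2=V(G)$. The latter holds since every bag lies in some $Z_t$ with $t$ in $T_1$ or $T_2$. For the former, suppose $uv\in E(G)$; by (T1) there is a bag containing both, and by (T2) the subtrees $Z^{-1}(u)$ and $Z^{-1}(v)$ are connected and intersect. If $u\in W_1\setminus W_2$ and $v\in W_2\setminus W_1$, then $Z^{-1}(u)\subseteq T_1$ and $Z^{-1}(v)\subseteq T_2$, so these subtrees are confined to different components of $T-t_1t_2$ and cannot meet, a contradiction. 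Thus any such edge would force a vertex in the intersection to appear on both sides, landing it in $Z_{t_1}\cap Z_{t_2}$; more precisely any vertex in $W_1\cap W_2$ has its subtree meeting both $T_1$ and $T_2$, hence passing through the edge $t_1t_2$ and so lying in both $Z_{t_1}$ and $Z_{t_2}$, which gives the separator. I expect part (iii) to require the most care, since one must track the interaction between the connectedness of the subtrees $Z^{-1}(v)$ and the split of $T$ induced by removing the edge $t_1t_2$.
\end{proof}
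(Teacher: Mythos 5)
Your proposal is correct and takes the standard route: the paper states Lemma~\ref{twprop} without proof, deferring to~\cite{Diestel05,reed1997tree}, and your argument---checking (T1)/(T2) directly for part (i), invoking the Helly property of subtrees of a tree for part (ii), and using the connectedness of the sets $Z^{-1}(v)$ across the split $T-t_1t_2$ to pin $W_1\cap W_2$ inside $Z_{t_1}\cap Z_{t_2}$ for part (iii)---is exactly the classical proof found in those references. It is worth noting that your Helly-based treatment of (ii) is precisely the pattern the paper later generalises: Lemma~\ref{omega<mw} is proved from the Helly property of convex sets in median graphs (Lemma~\ref{helly}), of which subtrees of a tree are the special case.
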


\noindent
Inspired by the nature of (T2) and the fact that a subgraph of a tree is convex if and only if it is connected, 
general median decompositions were introduced in~\cite{stavropoulos2015medianwidth}.

A \emph{median decomposition} $\DDD$ of a graph $G$ is a pair $(M,\XXX)$, where
$M$ is a median graph and $\XXX=(X_a)_{a\in V(M)}$ is a family of subsets of $V(G)$
(called bags) such that 
\begin{itemize}
 \item[(M1)] for every edge $uv\in E(G)$ there exists $a \in V(M)$ with $u,v \in X_a$, 
 \item[(M2)] for every $v\in V(G)$, the set $X^{-1}(v):=\{a\in V(M) \mid v\in X_a\}$ is a non-empty convex subgraph of $M$.
\end{itemize} 
The \emph{width} of a median decomposition $\DDD=(T,\XXX)$ is the number $$\max\{|X_a| \mid a\in V(M)\}.$$ 

\noindent
Let $\MMM^G$ be the set of all median decompositions of $G$. The \emph{medianwidth} $\mw(G)$ of $G$ is the least width of any median decomposition of $G$:
$$\mw(G):=\min\limits_{\DDD\in \MMM^G}\max\{|X_a| \mid a\in V(M)\}.$$ 
\noindent
Since $\TTT^G \subseteq \MMM^G$, by definition of $\mw(G)$ we have $\mw(G)\leq tw(G)+1$.

Let us fast go through the main properties of median decompositions.
For the lemmata that follow, $\DDD=(M,\XXX) \in \MMM^G$ is a median decomposition of a graph $G$. 

\begin{lemma}\cite{stavropoulos2015medianwidth}\label{submw}
 For every $H\subseteq G$, $(M, (X_a\cap V(H))_{a\in M})$ is a median decomposition of $H$, hence $\mw(H)\leq \mw(G)$.
\end{lemma}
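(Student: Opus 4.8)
The plan is to mimic the standard ``restriction'' argument for tree decompositions (Lemma~\ref{twprop}(i)) and adapt it to the median setting. Given a median decomposition $\DDD=(M,\XXX)\in\MMM^G$ and a subgraph $H\subseteq G$, I would keep the same median graph $M$ and simply intersect every bag with $V(H)$, setting $X'_a:=X_a\cap V(H)$ for each $a\in V(M)$. The claim is that $\DDD'=(M,\XXX')$ with $\XXX'=(X'_a)_{a\in V(M)}$ is a median decomposition of $H$. Since each bag only shrinks, we immediately get $\max\{|X'_a|\mid a\in V(M)\}\leq\max\{|X_a|\mid a\in V(M)\}$, which yields $\mw(H)\leq\mw(G)$ once we know $\DDD'$ is a valid decomposition of $H$.

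So the real content is verifying the two axioms (M1) and (M2) for $\DDD'$. For (M1): let $uv\in E(H)$. Then $uv\in E(G)$ as well, so by (M1) for $\DDD$ there is some $a\in V(M)$ with $u,v\in X_a$; since $u,v\in V(H)$, we have $u,v\in X_a\cap V(H)=X'_a$, as required. For (M2): fix $v\in V(H)$. I would compute
\[
X'^{-1}(v)=\{a\in V(M)\mid v\in X_a\cap V(H)\}=\{a\in V(M)\mid v\in X_a\}=X^{-1}(v),
\]
where the last equality uses that $v\in V(H)$, so the condition $v\in V(H)$ is automatic and the constraint reduces to $v\in X_a$. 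Since $v\in V(H)\subseteq V(G)$, axiom (M2) for $\DDD$ tells us that $X^{-1}(v)$ is a non-empty convex subgraph of $M$; hence so is $X'^{-1}(v)$, establishing (M2) for $\DDD'$.

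I do not expect any genuine obstacle here: both axioms transfer directly because intersecting with $V(H)$ does not affect the index set $X^{-1}(v)$ for vertices $v$ that already lie in $V(H)$, and it can only drop pairs from bags, never create new ones that violate (M1). The one point worth stating cleanly is the identity $X'^{-1}(v)=X^{-1}(v)$ for $v\in V(H)$, since the whole verification of (M2) — and in particular the preservation of non-emptiness and convexity in $M$ — rests on it. This mirrors exactly the classical proof for tree decompositions, with ``convex subgraph'' playing the role of ``connected subtree''.
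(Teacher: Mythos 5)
Your proof is correct and is essentially the intended argument: the paper cites this lemma from~\cite{stavropoulos2015medianwidth} without reproducing its proof, and the proof there is exactly this restriction argument mirroring Lemma~\ref{twprop}(i), keeping $M$ and intersecting bags with $V(H)$. The identity $X'^{-1}(v)=X^{-1}(v)$ for $v\in V(H)$, which you isolate explicitly, is precisely what makes non-emptiness and convexity in (M2) transfer, so there is nothing to add.
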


The Helly property of the convex sets of median graphs was the second reason that indicated that median decompositions seem to be a natural notion. 
It is what allowed the proof of a direct analogue of Lemma~\ref{twprop} (ii).

\begin{lemma}\cite{stavropoulos2015medianwidth}\label{omega<mw}
 Any complete subgraph of $G$ is contained in some bag of $\DDD$. In particular, $\omega(G)\leq \mw(G)$.
\end{lemma}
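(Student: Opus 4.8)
The plan is to prove Lemma~\ref{omega<mw} by combining property (M2) with the Helly property of convex sets (Lemma~\ref{helly}), mirroring the classical argument for tree decompositions. Let $K\subseteq V(G)$ be the vertex set of a complete subgraph of $G$. For each $v\in K$, property (M2) guarantees that $X^{-1}(v)=\{a\in V(M)\mid v\in X_a\}$ is a non-empty convex subgraph of $M$. The goal is to show that $\bigcap_{v\in K}X^{-1}(v)\neq\emptyset$: any vertex $a$ in this intersection satisfies $K\subseteq X_a$, which immediately gives the result, since then $\omega(G)\leq\max\{|X_a|\mid a\in V(M)\}$ for every median decomposition, hence $\omega(G)\leq\mw(G)$.

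First I would observe that the family $\{X^{-1}(v)\mid v\in K\}$ is a finite family of convex sets of the median graph $M$. By Lemma~\ref{helly}, the convex sets of $M$ have the Helly property, so to conclude that the total intersection is non-empty it suffices to check that the sets are \emph{pairwise} intersecting. Thus the heart of the argument reduces to a pairwise statement: for any two vertices $u,v\in K$, we must show $X^{-1}(u)\cap X^{-1}(v)\neq\emptyset$, i.e.\ there is some bag containing both $u$ and $v$.

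For the pairwise step I would use property (M1). Since $K$ induces a complete subgraph, any two distinct $u,v\in K$ are joined by an edge $uv\in E(G)$, so (M1) yields a vertex $a\in V(M)$ with $u,v\in X_a$, and hence $a\in X^{-1}(u)\cap X^{-1}(v)$. (If $K$ has a single vertex the statement is trivial via non-emptiness of $X^{-1}(v)$.) This establishes pairwise intersection, and then the Helly property upgrades it to a common vertex $a\in\bigcap_{v\in K}X^{-1}(v)$, completing the proof.

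The main obstacle is essentially conceptual rather than computational: the passage from pairwise to global intersection is exactly where tree decompositions and median decompositions both rely on a Helly-type property, and it is crucial that Lemma~\ref{helly} applies to the convex sets delivered by (M2). The only point requiring care is that (M2) indeed produces convex (not merely connected) subgraphs, so that Lemma~\ref{helly} is applicable; given the definitions this is immediate, so the proof is short once the Helly property is invoked correctly.
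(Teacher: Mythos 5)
Your proof is correct and follows exactly the route the paper indicates: use (M1) to show the convex sets $X^{-1}(v)$, $v\in K$, pairwise intersect, then invoke the Helly property of convex sets in median graphs (Lemma~\ref{helly}) to obtain a common bag containing $K$. This is precisely the argument from~\cite{stavropoulos2015medianwidth} that the paper alludes to when it says the Helly property ``is what allowed the proof of a direct analogue of Lemma~\ref{twprop}~(ii).''
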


For a median decomposition $(M,\XXX)$ and a minimal cut $F\subseteq E(M)$ of $M$ that separates $V(M)$ into $W_1$ and $W_2$, let $U_i$ be the vertices of $W_i$ adjacent to edges of $F$,
and let $Y_i:= \bigcup_{x\in W_i}X_x$, $Z_i:= \bigcup_{x\in U_i}X_x$, where $i=1,2$. 
The analogue of Lemma~\ref{twprop}(iii) says that minimal cuts of $M$ correspond to separations of $G$.

\begin{lemma}\cite{stavropoulos2015medianwidth}\label{mincut}
 For every minimal cut $F$ of $M$ and $Y_i, Z_i$, $i=1,2$, defined as above, $Z_1\cap Z_2$ separates $Y_1$ from $Y_2$.  
\end{lemma}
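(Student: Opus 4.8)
The plan is to show that the pair $(Y_1,Y_2)$ is in fact a separation of $G$ whose separator $Y_1\cap Y_2$ coincides exactly with $Z_1\cap Z_2$; the claim then follows immediately from the definition of ``separates'' given before Lemma~\ref{twprop}. Throughout I would use only that $F$ is an edge cut of $M$ splitting $V(M)$ into the two sides $W_1,W_2$, together with property (M2), namely that each $X^{-1}(v)$ induces a (convex, hence) connected subgraph of $M$; beyond connectivity the median structure of $M$ is not actually needed.

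First I would verify that $(Y_1,Y_2)$ is a separation. Since $X^{-1}(v)\neq\emptyset$ for every $v\in V(G)$ by (M2), and $W_1\cup W_2=V(M)$, every vertex of $G$ lies in a bag indexed by a vertex of $W_1$ or of $W_2$, so $Y_1\cup Y_2=V(G)$. For the non-adjacency condition, take any edge $uv\in E(G)$; by (M1) there is $a\in V(M)$ with $u,v\in X_a$, and since $a$ lies in $W_1$ or in $W_2$, both endpoints lie in the same $Y_i$. Hence no edge of $G$ can join $Y_1\setminus Y_2$ to $Y_2\setminus Y_1$, and $(Y_1,Y_2)$ is a separation of $G$.

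The key step, which I expect to be the main obstacle, is the identity $Y_1\cap Y_2=Z_1\cap Z_2$. The inclusion $Z_1\cap Z_2\subseteq Y_1\cap Y_2$ is immediate, since $U_i\subseteq W_i$ forces $Z_i\subseteq Y_i$. For the reverse inclusion, let $v\in Y_1\cap Y_2$, so that $X^{-1}(v)$ meets both $W_1$ and $W_2$. By (M2) the set $X^{-1}(v)$ is connected, so it contains a path joining a vertex of $W_1$ to a vertex of $W_2$; as $F$ is a cut separating $W_1$ from $W_2$, this path must traverse some edge $pq\in F$ with $p\in W_1$ and $q\in W_2$. Both endpoints of this edge lie on the path and therefore in $X^{-1}(v)$, i.e. $v\in X_p$ and $v\in X_q$. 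By the definition of the $U$-sets we have $p\in U_1$ and $q\in U_2$, whence $v\in Z_1$ and $v\in Z_2$, that is $v\in Z_1\cap Z_2$.

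Combining the two steps, $(Y_1,Y_2)$ is a separation of $G$ with $Y_1\cap Y_2=Z_1\cap Z_2$, so by definition $Z_1\cap Z_2$ separates $Y_1$ from $Y_2$, as required. The delicate point is entirely the middle inclusion: one must argue that a connected set of bag-indices straddling both sides of the cut forces $v$ simultaneously into a $U_1$-bag and a $U_2$-bag. This is precisely where the connectivity guaranteed by (M2) and the cut property of $F$ interact, and it is the median analogue of how, in a tree decomposition, a vertex appearing on both sides of an edge $t_1t_2$ must already sit in $Z_{t_1}\cap Z_{t_2}$.
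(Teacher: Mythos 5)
Your proof is correct and follows exactly the route behind this lemma: you establish that $(Y_1,Y_2)$ is a separation using (M1) together with $W_1\cup W_2=V(M)$, and you prove the key identity $Y_1\cap Y_2=Z_1\cap Z_2$ by running the connected (convex) set $X^{-1}(v)$ across the cut, using that a \emph{minimal} cut $F$ consists of all edges between $W_1$ and $W_2$, so the crossing edge $pq$ of the path lies in $F$ and puts $v$ into both $Z_1$ and $Z_2$. The present paper states the lemma without proof, citing \cite{stavropoulos2015medianwidth}, and the argument there is essentially the same, so there is nothing to add; your closing observation that only connectedness of the sets $X^{-1}(v)$ (not the full median structure) is needed is also accurate, mirroring the tree-decomposition case of Lemma~\ref{twprop}(iii).
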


Recall that by Lemma~\ref{theta}, for an edge $ab$ of $M$, the $\Theta$-class $F_{ab}$ is a minimal cut of $M$. Denote $Y_{ab}:= \bigcup_{x\in W_{ab}}X_x$ and $Z_{ab}:= \bigcup_{x\in U_{ab}}X_x$. 
We will refer to them as the \emph{$Y$-sets} and \emph{$Z$-sets} of a median decomposition $\DDD$.
Note that the $Y$-sets and $Z$-sets are subsets of the decomposed graph $G$, while the $W$-sets and $U$-sets are subsets of the median graph $M$ of the decomposition.
A special case of Lemma~\ref{mincut} is the following more specific analogue of Lemma~\ref{twprop}(iii).

\begin{lemma}\cite{stavropoulos2015medianwidth}\label{sep}
 For every edge $ab$ of $M$, $Z_{ab}\cap Z_{ba}$ separates $Y_{ab}$ from $Y_{ba}$.\qed
\end{lemma}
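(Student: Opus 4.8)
The plan is to derive this statement as an immediate specialisation of Lemma~\ref{mincut}. First I would invoke Lemma~\ref{theta}(2) to note that the $\Theta$-class $F_{ab}$ is a minimal cut of $M$. Since $M$ is median, hence bipartite, this cut partitions $V(M)$ into exactly the two sides $W_{ab}$ and $W_{ba}$ (recall that in a bipartite graph $V(M)=W_{ab}\cup W_{ba}$ with $W_{ab}\cap W_{ba}=\emptyset$), and by the definition of $F_{ab}$ in Lemma~\ref{theta} these are precisely the edges running between the two sides.

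Next I would check that the auxiliary sets attached to this particular cut coincide with the $U$-, $Y$- and $Z$-sets indexed by $ab$. Taking $F=F_{ab}$ in the setup preceding Lemma~\ref{mincut}, the vertices of the side $W_{ab}$ incident to an edge of $F_{ab}$ are exactly those of $W_{ab}$ adjacent to $W_{ba}$, i.e. $W_{ab}\cap N_M(W_{ba})=U_{ab}$, and symmetrically on the other side; hence $U_1=U_{ab}$ and $U_2=U_{ba}$. Consequently the unions $Y_1=\bigcup_{x\in W_{ab}}X_x=Y_{ab}$ and $Z_1=\bigcup_{x\in U_{ab}}X_x=Z_{ab}$ agree with the $Y$- and $Z$-sets of the decomposition, and likewise $Y_2=Y_{ba}$ and $Z_2=Z_{ba}$.

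With this identification in hand, Lemma~\ref{mincut} applied to the minimal cut $F_{ab}$ asserts exactly that $Z_1\cap Z_2=Z_{ab}\cap Z_{ba}$ separates $Y_1=Y_{ab}$ from $Y_2=Y_{ba}$, which is the desired conclusion.

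There is essentially no real obstacle here, since the heavy lifting has already been carried out in Lemma~\ref{mincut}; the only point requiring a small amount of care is the bookkeeping that the two sides of $F_{ab}$ and their incident-vertex sets genuinely match the $ab$-indexed notation, i.e. that $U_{ab}$ as defined through neighbourhoods coincides with the set of vertices of $W_{ab}$ meeting an edge of $F_{ab}$. This is immediate once one recalls that $F_{ab}$ is by definition the edge set between $W_{ab}$ and $W_{ba}$.
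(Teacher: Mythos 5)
Your proposal is correct and matches the paper's own treatment exactly: the paper dismisses the lemma with a \qed precisely because, as you argue, Lemma~\ref{theta} shows $F_{ab}$ is a minimal cut of $M$ with sides $W_{ab},W_{ba}$, so the statement is the special case $F=F_{ab}$ of Lemma~\ref{mincut} once the sets $U_1,U_2,Y_1,Y_2,Z_1,Z_2$ are identified with $U_{ab},U_{ba},Y_{ab},Y_{ba},Z_{ab},Z_{ba}$. Your bookkeeping of this identification is exactly the intended argument, so nothing is missing.
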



Finally, it turns out that medianwidth is exactly the clique number of a graph.


\begin{figure}[t]
 \centering
 
\begin{tikzpicture}[
  dot/.style={draw,fill=black,circle,inner sep=0.5pt}
  ]

 \foreach \i in {1,...,5} {
    \node[dot,xshift=-100,yshift=-10,label=18+\i*72:\tiny{$\i$}] (\i) at (18+\i*72:1) {};
}

\draw (1) -- (2);
\draw (2) -- (3);
\draw (3) -- (4);
\draw (4) -- (5);
\draw (5) -- (1);
 
\node[dot,fill=white] (12) at (0,0) {12};
\node[draw,circle,inner sep=2pt,fill=white] (11) at (1,0) {1};
\node[dot,fill=white] (15) at (2,0) {15};
\node[dot,fill=white] (45) at (2,-1) {45};
\node[dot,fill=white] (34) at (1,-1) {34};
\node[dot,fill=white] (23) at (0,-1) {23};

\draw (12) -- (11);
\draw (11) -- (15);
\draw (15) -- (45);
\draw (45) -- (34);
\draw (34) -- (23);
\draw (12) -- (23);
\draw (11) -- (34);

\end{tikzpicture}
\caption{A median decomposition of $C_5$ of width 2.}
\label{fig:c5}
\end{figure}
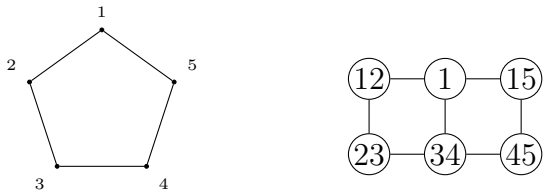


\begin{theorem}\cite{stavropoulos2015medianwidth}\label{omega=mw}
 For any graph $G$, $\mw(G)=\omega(G).$
\end{theorem}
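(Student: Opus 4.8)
The plan is to prove the two inequalities $\omega(G)\le \mw(G)$ and $\mw(G)\le \omega(G)$ separately. The first is exactly Lemma~\ref{omega<mw}, which is already available, so all of the work lies in producing, for every graph $G$, a median decomposition of width at most $\omega(G)$.

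For the upper bound I would build an explicit decomposition directly out of the cliques of $G$. Identify $V(G)=\{v_1,\dots,v_n\}$ with the coordinates of the hypercube $Q_n$, so that a vertex of $Q_n$ is a subset of $V(G)$, and let $M$ be the subgraph of $Q_n$ induced by the family $\mathcal{F}$ of all cliques of $G$ (the empty set included). I would take $M$ as the underlying graph of the decomposition and simply set $X_S:=S$ for every clique $S\in V(M)$. With this choice the width is $\max\{|S| : S \text{ a clique}\}=\omega(G)$, so it only remains to check that $M$ is a median graph and that $(M,\mathcal{X})$ satisfies (M1) and (M2).

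The main obstacle, and the technical heart of the argument, is to show that $M$ is median. I would establish this in two steps. First, $M$ is an isometric subgraph of $Q_n$: given cliques $A,B$, one reaches $B$ from $A$ by deleting the elements of $A\setminus B$ one at a time and then inserting those of $B\setminus A$ one at a time, and every intermediate set is a subset of $A$ or of $B$, hence a clique; this gives an $\mathcal{F}$-path of length $|A\triangle B|=d_{Q_n}(A,B)$, so distances, and therefore intervals, in $M$ agree with those of $Q_n$ restricted to $\mathcal{F}$. Second, $\mathcal{F}$ is closed under the median operation of $Q_n$: the median of three hypercube vertices is their coordinatewise majority, whose support is $(A\cap B)\cup(B\cap C)\cup(C\cap A)$, and this set is again a clique, because any two of its elements each lie in at least two of $A,B,C$, and two subcollections of $\{A,B,C\}$ of size at least two must share a common member, placing the two elements together in a single clique. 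Combining the two steps, for any $A,B,C\in V(M)$ the $Q_n$-median $m$ lies in $M$ and, by isometry, lies in each of the three $M$-intervals, so it is a median in $M$; uniqueness transfers from $Q_n$, since any $M$-median is in particular a $Q_n$-median. Hence $M$ is median (and connected through the empty clique).

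Finally I would verify the decomposition axioms, which are now immediate. For (M1), every edge $uv$ of $G$ yields the clique $\{u,v\}\in V(M)$, whose bag contains both endpoints. For (M2), fix $v\in V(G)$; the set $X^{-1}(v)=\{S\in\mathcal{F} : v\in S\}$ contains $\{v\}$ and is convex, because for cliques $S,S'$ containing $v$ the $Q_n$-interval $I(S,S')=\{T : S\cap S'\subseteq T\subseteq S\cup S'\}$ consists of sets containing $v$, and by isometry $I_M(S,S')=I(S,S')\cap\mathcal{F}$. Thus $(M,\mathcal{X})$ is a median decomposition of width $\omega(G)$, giving $\mw(G)\le\omega(G)$, and together with Lemma~\ref{omega<mw} this proves $\mw(G)=\omega(G)$. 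I expect the only delicate point to be the clean passage from ``median-closed isometric subgraph'' to ``median graph,'' which the isometry step is designed precisely to make routine.
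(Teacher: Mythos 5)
Your proposal is correct. Note that the paper at hand does not prove this theorem itself but imports it from \cite{stavropoulos2015medianwidth}; the construction you give is precisely the \emph{simplex graph} of $G$ (the graph on all cliques of $G$, including $\emptyset$, with two cliques adjacent when they differ in exactly one vertex), and this is the standard way to witness $\mw(G)\leq\omega(G)$, matching the cited source's approach: bags $X_S=S$, width $\omega(G)$, lower bound from Lemma~\ref{omega<mw} via the Helly property. All your verifications check out: the one-element-at-a-time path through subsets of $A$ and then of $B$ shows $M$ is isometric in $Q_n$, the majority median $(A\cap B)\cup(B\cap C)\cup(C\cap A)$ is a clique by the pigeonhole argument on pairs from $\{A,B,C\}$, interval equality $I_M(u,v)=I_{Q_n}(u,v)\cap V(M)$ transfers existence and uniqueness of medians, and (M1), (M2) follow as you say, with convexity of $X^{-1}(v)$ immediate from the subcube description of hypercube intervals. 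The only genuine difference from the literature is that you prove from first principles that the simplex graph is median (via ``isometric and median-closed subgraph of a median graph is median''), where one would typically cite the known result of Bandelt and van de Vel; your self-contained route is a mild but real gain in that it keeps the whole theorem elementary, at the cost of a page of routine verification.
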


\section{The $i$-Latticewidth of Graphs}\label{sec:lat}

In~\cite{stavropoulos2015medianwidth}, a notion of dimension, called the \emph{tree dimension}, was used to restrict the median graphs $M$ allowed as a model of a median decomposition of a graph. 
The tree dimension of a graph $M$ is the minimum $k$ such that $M$ has an isometric embedding into a Cartesian product of $k$ trees 
and it is well-known that median graphs have finite tree dimension~\cite{ovchinnikov2011graphs}.

Fon an $i\geq 1$, an \emph{$i$-median decomposition} of $G$ is a median decomposition $\DDD=(M,\XXX)$ satisfying (M1),(M2), where $M$ is a median graph of tree dimension at most $i$.
We denote the set of $i$-median decompositions of $G$ as $\MMM^G_i$. The \emph{$i$-medianwidth} $\mw_i(G)$ of $G$ is the least width of any $i$-median decomposition of $G$:
$$\mw_i(G):=\min\limits_{\DDD\in \MMM^G_i}\max\{|X_a| \mid a\in V(M)\}.$$ 
Since trees are exactly the median graphs of tree dimension $1$, the $1$-median decompositions are the tree decompositions of $G$, therefore $\mw_1(G)=\tw(G)+1$. 
This gives rise to a non-increasing sequence of invariants formed by $\mw_i$:
$$\tw(G)+1=\mw_1(G)\geq \mw_2(G)\geq \dots \geq \mw(G)=\omega(G).$$

\noindent
One of the main results of~\cite{stavropoulos2015medianwidth} was that the $i$-medianwidth of a graph can be seen as the ``best intersection of $i$ tree-decompositions of the graph``.
In the following theorem, when we denote tree decompositions with $\DDD^j$, we mean $\DDD^j=(T^j,\ZZZ^j)$. More precisely:

\begin{theorem}\label{imwchar}
 For any graph $G$ and any integer $i\geq 1$, $$\mw_i(G)=\min\limits_{\DDD^1,\ldots,\DDD^i\in \TTT^G}\max\{|\bigcap_{j=1}^iZ^j_{t_j}|\mid t_j\in V(T^j)\}.$$
\end{theorem}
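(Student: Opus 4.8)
The plan is to prove the inequality in each direction by translating between a median graph of tree dimension at most $i$ and an $i$-tuple of trees realising that dimension. For $\mw_i(G)\le\min_{\DDD^1,\dots,\DDD^i}\max\{|\bigcap_j Z^j_{t_j}|\}$, I would start from arbitrary tree decompositions $\DDD^j=(T^j,\ZZZ^j)$ and build an explicit decomposition on the Cartesian product $M:=T^1\Box\cdots\Box T^i$. By Lemma~\ref{cartmed}, $M$ is median, and being a product of $i$ trees it has tree dimension at most $i$, so it is an admissible model once we attach bags. Setting $X_a:=\bigcap_{j=1}^i Z^j_{t_j}$ for $a=(t_1,\dots,t_i)$, axiom (M1) is immediate by choosing, in each coordinate, a node whose $\ZZZ^j$-bag contains a given edge; and for (M2) one has $X^{-1}(v)=\prod_j (Z^j)^{-1}(v)$, a product of subtrees, which is convex by Lemma~\ref{cartmed} and non-empty. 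Hence $(M,\XXX)\in\MMM^G_i$ has width exactly $\max\{|\bigcap_j Z^j_{t_j}|\}$, and minimising over the $\DDD^j$ gives the first inequality.

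For the reverse inequality I would fix an optimal $\DDD=(M,\XXX)\in\MMM^G_i$ together with an isometric embedding $\phi=(\phi_1,\dots,\phi_i)\colon V(M)\to V(T^1)\times\cdots\times V(T^i)$ into a product of $i$ trees, and define $Z^j_{t_j}:=\bigcup\{X_a\mid\phi_j(a)=t_j\}$. To see that each $\DDD^j=(T^j,\ZZZ^j)$ is a tree decomposition: (T1) follows from (M1) by projecting a witnessing vertex of $M$ to each coordinate, while (T2) rests on the identity $(Z^j)^{-1}(v)=\phi_j(X^{-1}(v))$ together with the claim that the $\phi_j$-projection of a convex set is a subtree. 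For the latter, a geodesic of $M$ between two vertices of the convex set $X^{-1}(v)$ stays inside it, maps to a geodesic of the product because $\phi$ is isometric, and a geodesic of a Cartesian product projects onto a geodesic in each factor; its image therefore connects the two projected vertices within $\phi_j(X^{-1}(v))$.

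It remains to bound $|\bigcap_j Z^j_{t_j}|$ by the width $\max_a|X_a|=\mw_i(G)$, and I expect this single-bag containment to be the crux. The plan is to show that the convex sets $X^{-1}(v)$, for $v\in\bigcap_j Z^j_{t_j}$, share a common vertex $a^\ast$, since then all such $v$ lie in the single bag $X_{a^\ast}$. By the Helly property (Lemma~\ref{helly}) it suffices to prove pairwise intersections, \ie that any $v,w\in\bigcap_j Z^j_{t_j}$ occur together in some bag of $M$. Here I would exploit that, through $\phi$, every $\Theta$-class of $M$ (a minimal cut, by Lemma~\ref{theta}) lives in a single coordinate $j_0$ and an edge of $T^{j_0}$, so the halfspace $W_{ab}$ is exactly the $\phi_{j_0}$-preimage of one component of $T^{j_0}$ minus that edge. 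If $X^{-1}(v)$ and $X^{-1}(w)$ were disjoint, the standard separation of disjoint convex sets in a median graph by such a $\Theta$-class would push their $\phi_{j_0}$-projections into the two disjoint components; but $t_{j_0}$ lies in both projections because $v,w\in Z^{j_0}_{t_{j_0}}$, a contradiction. This yields the pairwise intersections, Helly produces $a^\ast$, and the bound follows. The main obstacle is precisely this reduction of an intersection of unions of bags to a single bag, which couples the halfspace-separation of convex sets in $M$ with the coordinatewise description of the embedding before the Helly property can finish the argument.
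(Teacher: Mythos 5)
Your proposal is correct, and its first half coincides with the paper's: note that Theorem~\ref{imwchar} is only quoted here from~\cite{stavropoulos2015medianwidth}, so the in-paper proof to compare against is that of the path analogue, Theorem~\ref{ilwchar}. The inequality $\mw_i(G)\leq\lambda$ via the product $T^1\Box\cdots\Box T^i$ with bags $X_{(t_1,\ldots,t_i)}=\bigcap_j Z^j_{t_j}$, with (M2) from the identity $X^{-1}(v)=\Box_j (Z^j)^{-1}(v)$ and Lemma~\ref{cartmed}, is exactly the paper's argument. Where you genuinely diverge is the hard direction. The paper goes through an irredundant embedding (Lemma~\ref{optemb}), strong directions of $\Theta$-classes (Lemma~\ref{embedding}) and laminarity of the induced separations (Lemma~\ref{lammed}) to certify that the fiberwise unions $Z^j_{t_j}=\bigcup_{\phi_j(a)=t_j}X_a$ form decompositions of $G$, and then settles the transversal bound with the remark that maximal transversal intersections ``clearly correspond to the elements of $\XXX$'' --- which is immediate only for transversals of the form $(\phi_1(a),\ldots,\phi_i(a))$ with $a\in V(M)$, and requires an argument for lattice points outside $\phi(V(M))$. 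You instead verify (T2) directly (convexity of $X^{-1}(v)$ plus the fact that geodesics of a Cartesian product project to geodesics of the factors, so $\phi_j(X^{-1}(v))$ is a subtree), and, crucially, you prove the transversal bound in full: pairwise intersection of the convex sets $X^{-1}(v)$, $v\in\bigcap_j Z^j_{t_j}$, followed by the Helly property (Lemma~\ref{helly}) to place all such $v$ in one bag $X_{a^\ast}$. Your halfspace computation is sound: for an edge $ab$ of $M$ whose image changes only coordinate $j_0$ along $e_{j_0}=\alpha\beta$, isometry gives $d(x,a)-d(x,b)=d_{T^{j_0}}(\phi_{j_0}(x),\alpha)-d_{T^{j_0}}(\phi_{j_0}(x),\beta)$, so $W_{ab}$ is precisely the $\phi_{j_0}$-preimage of the component of $\alpha$ in $T^{j_0}-e_{j_0}$; in fact you only need this single edge, not the statement that the entire $\Theta$-class lives in one coordinate (though that is also true, since isometric embeddings preserve the Djokovic--Winkler relation). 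The one external ingredient you invoke without proof or reference is the Kakutani-type separation of two disjoint convex sets of a median graph by complementary halfspaces; this is standard (see van de Vel~\cite{van1993theory}) but appears nowhere in this paper, so it should be cited or given a short proof. Net assessment: your route trades the laminarity machinery for one classical separation theorem, and in exchange makes fully rigorous the step that the paper's analogous proof dispatches with ``clearly''.
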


A $k$-lattice graph is a graph obtained by the Cartesian product of $k$ paths. In this section, we turn into another notion of dimension for median graphs, the \emph{lattice dimension}, 
namely the minimum $k$ such that a graph can be isometrically embedded
into a $k$-lattice graph. As with tree dimension, median graphs have finite lattice dimension~\cite{ovchinnikov2011graphs,Eppstein2005585}.
Paths are exactly the median graphs of lattice dimension equal to 1. We are led to the following definition.

For an $i\geq 1$, an \emph{$i$-lattice decomposition} of $G$ is a median decomposition $\DDD=(M,\XXX)$ satisfying (M1),(M2), where $M$ is a median graph of lattice dimension at most $i$.
We denote the set of $i$-lattice decompositions of $G$ as $\LLL^G_i$. The \emph{$i$-latticewidth} $\lw_i(G)$ of $G$ is the least width of any $i$-lattice decomposition of $G$:
$$\lw_i(G):=\min\limits_{\DDD\in \LLL^G_i}\max\{|X_a| \mid a\in V(M)\}.$$ 
Since $\LLL^G_i\subseteq \MMM^G_i$, we have $\mw_i(G)\leq \lw_i(G)$. The $1$-lattice decompositions are the path decompositions of $G$, therefore $\lw_1(G)=\pw(G)+1$. Similarly to the case of $\mw_i$,
the parameters $\lw_i$ form a hierarchy starting from pathwidth and converging to the clique number:
$$\pw(G)+1=\lw_1(G)\geq \lw_2(G)\geq \dots \geq \lw_{\infty}(G)=\mw(G)=\omega(G).$$

For a $k$-colourable graph $G$, let $c:V(G)\rightarrow \{1,\ldots,k\}$ be a proper colouring of $G$ and for $i=1,\ldots,k$, let $P_i$ be a path with $|c^{-1}(i)|$ many vertices, 
whose vertices are labeled by the vertices of $c^{-1}(i)$ with arbitrary order. Consider the $k$-lattice $L=\Box_{i=1}^{k}P_i$, 
whose vertices $\bold{a}=(v_1,\ldots,v_k)\in V(L)$ are labeled by the $k$-tuple of labels of $v_1,\ldots,v_k$. 
For a vertex $\bold{a}\in V(L)$, define $X_\bold{a}$ to be the set of vertices that constitute the $k$-tuple of labels of $\bold{a}$. Let $\XXX=(X_\bold{a})_{\bold{a}\in V(L)}$.

\begin{lemma}\cite{stavropoulos2015medianwidth}\label{chromdec}
 The pair $\DDD=(L,\XXX)$ is a median decomposition of $G$ of width $k$.
\end{lemma}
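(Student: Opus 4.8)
The plan is to verify directly that the pair $\DDD=(L,\XXX)$ constructed from the proper $k$-colouring $c$ satisfies the two axioms (M1) and (M2), and then to read off that its width is exactly $k$. The underlying graph $L=\Box_{i=1}^{k}P_i$ is a Cartesian product of paths, hence a lattice graph and in particular a median graph by Lemma~\ref{cartmed}, so it is a legitimate model for a median decomposition; this takes care of the structural requirement for free. The real content is to check the covering and convexity conditions for the bags $X_\bold{a}$, each of which is the set of $k$ vertices whose labels form the tuple $\bold{a}=(v_1,\ldots,v_k)$, one vertex per colour class.

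First I would establish (M1). Take an edge $uv\in E(G)$. Since $c$ is a \emph{proper} colouring, $u$ and $v$ receive distinct colours, say $c(u)=p$ and $c(v)=q$ with $p\neq q$. Because $u$ labels some vertex of $P_p$ and $v$ labels some vertex of $P_q$, and the paths in the other coordinates are nonempty, I can choose a vertex $\bold{a}\in V(L)$ whose $p$-th coordinate is the vertex of $P_p$ labelled $u$ and whose $q$-th coordinate is the vertex of $P_q$ labelled $v$ (filling the remaining coordinates arbitrarily). Then both $u$ and $v$ belong to $X_\bold{a}$, so the edge is covered. Here the fact that $p\neq q$ is essential: a single bag contains at most one vertex from each colour class, so two vertices of the same colour can never share a bag, which is exactly why a \emph{proper} colouring is needed.

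Next I would verify (M2), namely that for each $w\in V(G)$ the set $X^{-1}(w)=\{\bold{a}\in V(L)\mid w\in X_\bold{a}\}$ is nonempty and convex in $L$. Nonemptiness is immediate since $w$ labels some vertex of its own colour path. For convexity, let $c(w)=p$; then $w\in X_\bold{a}$ holds precisely when the $p$-th coordinate of $\bold{a}$ is the fixed vertex of $P_p$ labelled $w$, with all other coordinates unconstrained. Thus $X^{-1}(w)=C_1\Box\cdots\Box C_k$, where $C_p=\{(\text{vertex of }P_p\text{ labelled }w)\}$ is a single vertex (trivially convex in the path) and $C_j=V(P_j)$ for $j\neq p$ (the whole path, also convex). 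By Lemma~\ref{cartmed}, a Cartesian product of convex sets in the factors is convex in $L$, so $X^{-1}(w)$ is convex. Finally, every bag $X_\bold{a}$ consists of exactly $k$ vertices (one label per coordinate), so $\max\{|X_\bold{a}|\mid \bold{a}\in V(L)\}=k$ and the width is $k$.

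I do not expect any serious obstacle here; the argument is essentially a bookkeeping verification riding on Lemma~\ref{cartmed}. The one point meriting care is the convexity check in (M2): it would be tempting to argue combinatorially within $L$, but invoking the product characterisation of convex sets from Lemma~\ref{cartmed} is cleaner and avoids computing geodesics by hand. A minor edge case is to confirm each factor path is nonempty, which holds because every colour class used by $c$ is nonempty (one may discard unused colours, or note that empty paths would make $L$ degenerate); this guarantees the coordinate-filling step in (M1) is always possible.
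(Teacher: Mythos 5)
Your proposal is correct, and since the paper only cites this lemma from~\cite{stavropoulos2015medianwidth} without reproducing a proof, your direct verification is exactly the intended argument implicit in the construction: (M1) from properness of the colouring, (M2) from the product characterisation of convex sets in Lemma~\ref{cartmed}, and width $k$ since every bag consists of one vertex per colour class. Your attention to the edge cases (distinct colours forcing $p\neq q$, nonempty factor paths) is appropriate and nothing is missing.
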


\noindent
Median decompositions obtained from a colouring of $V(G)$ as in Lemma~\ref{chromdec} are called \emph{chromatic median decompositions} and were introduced in Section~4 of~\cite{stavropoulos2015medianwidth}. 
Let $\chi(G)$ be the \emph{chromatic number} of $G$. An immediate corollary of Lemma~\ref{chromdec} is the following bound.

\begin{lemma}\label{lwchi}
 For any graph $G$, $\lw_{\chi(G)}\leq \chi(G)$.\qed
\end{lemma}

\noindent
The results and proofs in the rest of this Section are in the spirit of Section~5 in~\cite{stavropoulos2015medianwidth}. 
It was proven there that complete $i+1$-partite graphs strongly distinguish $\mw_{i+1}$ from $\mw_i$.

\begin{lemma}\cite{stavropoulos2015medianwidth}\label{ipartitemw}
 For every $i\geq 1$, $\mw_i(K_{n_1,\ldots,n_{i+1}})\geq \min_{j=1}^{i+1}\{n_j\}+1$, while $\mw_{i+1}(K_{n_1,\ldots,n_{i+1}})=i+1$.
\end{lemma}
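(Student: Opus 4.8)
The plan is to treat the two bounds separately, disposing first of the easier equality $\mw_{i+1}(K_{n_1,\ldots,n_{i+1}})=i+1$. Write $G=K_{n_1,\ldots,n_{i+1}}$ and $n:=\min_j n_j$. For the lower bound I would use that the hierarchy is non-increasing with $\mw(G)=\omega(G)$ (Theorem~\ref{omega=mw}), so $\mw_{i+1}(G)\ge \mw(G)=\omega(G)=i+1$; the last equality holds because a clique meets each part in at most one vertex, while one vertex per part is a clique. For the matching upper bound, note $\chi(G)=i+1$ and invoke Lemma~\ref{lwchi} (equivalently the chromatic median decomposition of Lemma~\ref{chromdec}, which lives on a product of $i+1$ paths and hence has tree dimension at most $i+1$) to get $\mw_{i+1}(G)\le \lw_{i+1}(G)\le i+1$.

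The substance is the lower bound $\mw_i(G)\ge n+1$. By Theorem~\ref{imwchar} it suffices to show that for every choice of tree decompositions $\DDD^1,\ldots,\DDD^i\in\TTT^G$ there are nodes $t_1,\ldots,t_i$ with $|\bigcap_{j=1}^i Z^j_{t_j}|\ge n+1$. The engine is a structural observation about separations of $G$: if $(W_1,W_2)$ is a separation with both $W_1\setminus W_2$ and $W_2\setminus W_1$ non-empty, then any $u\in W_1\setminus W_2$ and $v\in W_2\setminus W_1$ are non-adjacent, hence lie in a common part; a short argument forces all of $(W_1\setminus W_2)\cup(W_2\setminus W_1)$ into a single part $V_\ell$, so the separator $W_1\cap W_2$ contains every other part entirely. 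Feeding this through Lemma~\ref{twprop}(iii) applied to the separation induced by an edge of a tree decomposition gives: for each edge, the corresponding adhesion set omits at most one part.

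Call a part $V_\ell$ \emph{split} in $\DDD^j$ if no single bag of $T^j$ contains all of $V_\ell$. The key claim is that each tree decomposition splits at most one part. Indeed, if $V_a$ is split, then since the subtrees $(Z^j)^{-1}(v)$ are convex sets of the median graph $T^j$, the Helly property (Lemma~\ref{helly}) gives two of them, $(Z^j)^{-1}(v)$ and $(Z^j)^{-1}(v')$, that are disjoint; this yields an edge of $T^j$ whose induced separation has $v,v'$ on opposite sides, so by the structural observation its adhesion contains every part other than $V_a$, whence every $V_b$ with $b\ne a$ sits inside a single bag and is unsplit. Since there are only $i$ decompositions but $i+1$ parts, and each decomposition splits at most one part, at least one part $V_{\ell^*}$ is unsplit in all of $\DDD^1,\ldots,\DDD^i$.

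It remains to extract the extra $+1$. Fix any vertex $w$ in a part different from $V_{\ell^*}$ (possible since $i+1\ge 2$). For each $j$ the subtrees $\{(Z^j)^{-1}(v):v\in V_{\ell^*}\}$ pairwise intersect (as $V_{\ell^*}$ is unsplit), and $(Z^j)^{-1}(w)$ meets each of them because $w$ is adjacent to every vertex of $V_{\ell^*}$ (property (T1)); by Helly (Lemma~\ref{helly}) the whole family shares a node $t_j$, whose bag then contains $V_{\ell^*}\cup\{w\}$. Choosing such a $t_j$ in each tree yields $V_{\ell^*}\cup\{w\}\subseteq\bigcap_{j=1}^i Z^j_{t_j}$, so the intersection has size at least $|V_{\ell^*}|+1\ge n+1$, as required. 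The main obstacle is the claim that a single tree decomposition splits at most one part: once the separation structure of complete multipartite graphs is in hand it is short, but it is precisely the step that exploits the complete-multipartite structure and drives the pigeonhole against the $i+1$ parts; the final Helly step producing the $+1$ is the other place where a little care is needed.
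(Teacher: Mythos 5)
Your proposal is correct, but note that this paper does not actually prove Lemma~\ref{ipartitemw} --- it imports it from \cite{stavropoulos2015medianwidth}, so there is no in-paper proof to match against. Your argument is a valid self-contained reconstruction along the natural lines: the equality $\mw_{i+1}=i+1$ via $\omega\leq\mw\leq\mw_{i+1}$ on one side and the chromatic median decomposition (Lemmata~\ref{chromdec} and~\ref{lwchi}, together with $\mw_{i+1}\leq\lw_{i+1}$) on the other, which is exactly how the present paper handles the analogous equality in Lemma~\ref{ipartitelw}; and the lower bound via Theorem~\ref{imwchar} plus the separation structure of complete multipartite graphs. Your key claim is sound: in $K_{n_1,\ldots,n_{i+1}}$ any separation with both strict sides non-empty confines $(W_1\setminus W_2)\cup(W_2\setminus W_1)$ to a single part (two vertices in distinct parts cannot both be non-adjacent to a common vertex of the other side), so the adhesion set of the relevant tree edge contains all other parts, whence each decomposition splits at most one part; the pigeonhole against $i+1$ parts and the final Helly application (the unsplit part's subtrees share a node, and the subtree of a vertex $w$ from another part meets each of them by (T1)) correctly deliver a transversal intersection of size at least $\min_j n_j+1$. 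The only steps you gloss --- choosing a tree edge that puts two disjoint subtrees strictly on opposite sides, and the containment $W_1\cap W_2\subseteq Z_{t_1}\cap Z_{t_2}$ --- are standard facts about tree decompositions and do not constitute gaps.
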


\noindent
Consequently, this fact directly translates to the case of latticewidth parameters: complete $i+1$-partite graphs have unbounded $i$-latticewidth, but bounded $i+1$-latticewidth.

\begin{lemma}\label{ipartitelw}
 For every $i\geq 1$, $\lw_i(K_{n_1,\ldots,n_{i+1}})\geq \min_{j=1}^{i+1}\{n_j\}+1$, while $\lw_{i+1}(K_{n_1,\ldots,n_{i+1}})=i+1$.
\end{lemma}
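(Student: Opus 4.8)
The plan is to prove the two claims of Lemma~\ref{ipartitelw} separately, using the already-established relationships between the latticewidth and medianwidth hierarchies and the analogous result for $\mw_i$ in Lemma~\ref{ipartitemw}.

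For the lower bound, I would simply invoke the inequality $\mw_i(G)\leq \lw_i(G)$, which holds for every graph $G$ and every $i\geq 1$ because $\LLL^G_i\subseteq \MMM^G_i$ (every median graph of lattice dimension at most $i$ has tree dimension at most $i$, since a $k$-lattice is a Cartesian product of $k$ paths, hence of $k$ trees). Combined with the lower bound $\mw_i(K_{n_1,\ldots,n_{i+1}})\geq \min_{j=1}^{i+1}\{n_j\}+1$ from Lemma~\ref{ipartitemw}, this immediately yields $\lw_i(K_{n_1,\ldots,n_{i+1}})\geq \mw_i(K_{n_1,\ldots,n_{i+1}})\geq \min_{j=1}^{i+1}\{n_j\}+1$, so the first half requires essentially no new work.

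For the upper bound $\lw_{i+1}(K_{n_1,\ldots,n_{i+1}})\leq i+1$, the natural approach is to exhibit an explicit $(i+1)$-lattice decomposition of width $i+1$. Here I would appeal to Lemma~\ref{lwchi}: a complete multipartite graph $K_{n_1,\ldots,n_{i+1}}$ has chromatic number exactly $i+1$ (the colour classes being the $i+1$ parts), so $\lw_{i+1}(K_{n_1,\ldots,n_{i+1}})\leq \chi=i+1$. Concretely this is the chromatic median decomposition of Lemma~\ref{chromdec} built from the proper $(i+1)$-colouring given by the parts, whose underlying median graph is the $(i+1)$-lattice $\Box_{j=1}^{i+1}P_j$ and therefore has lattice dimension at most $i+1$ by definition. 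For the matching lower bound at level $i+1$, I would again use $\mw_{i+1}\leq \lw_{i+1}$ together with $\mw_{i+1}(K_{n_1,\ldots,n_{i+1}})=i+1$ from Lemma~\ref{ipartitemw} to get $\lw_{i+1}\geq i+1$, and combine it with the upper bound to conclude $\lw_{i+1}(K_{n_1,\ldots,n_{i+1}})=i+1$.

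I do not anticipate a serious obstacle: the entire argument is a transfer of the $\mw_i$ result through the sandwiching inequality $\mw_i\leq \lw_i$ for the lower bounds, and an application of the chromatic-decomposition bound $\lw_{\chi}\leq\chi$ for the upper bound. The only point requiring a moment's care is verifying that lattice dimension at most $i$ implies tree dimension at most $i$ (so that $\LLL^G_i\subseteq \MMM^G_i$ is legitimate), but this is immediate since paths are trees and the two dimensions are defined by isometric embedding into Cartesian products of $i$ paths, respectively $i$ trees. This mirrors exactly the structure noted in the paragraph preceding the statement, where it is observed that the complete multipartite examples distinguish the levels of the $\lw_i$ hierarchy just as they do for $\mw_i$.
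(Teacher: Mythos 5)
Your proposal is correct and follows essentially the same route as the paper: the lower bound $\lw_i \geq \mw_i$ transferred from Lemma~\ref{ipartitemw}, and the equality $\lw_{i+1}(K_{n_1,\ldots,n_{i+1}})=i+1$ from the chromatic bound of Lemma~\ref{lwchi} combined with a matching lower bound. The only cosmetic difference is that the paper gets the lower bound at level $i+1$ from $\omega \leq \mw$ (Lemma~\ref{omega<mw}) rather than from $\mw_{i+1}=i+1$ as you do; both are immediate and equivalent here.
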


\begin{proof}
 Let $K=K_{n_1,\ldots,n_{i+1}}$. Since $\chi(K)=\omega(K)=i+1$, Lemmata~\ref{omega<mw} and~\ref{lwchi} show that $\lw_{i+1}(K)=i+1$.
 
 On the other hand, recall that $\lw_i(K)\geq \mw_i(K)$. Lemma~\ref{ipartitemw} completes the proof.
\end{proof}

\noindent
The lattice dimension of partial cubes has been studied in~\cite{Eppstein2005585} and~\cite{cheng2012poset}, but we shall only need simpler versions of some of the machinery used there. To this end, we will also adopt a notation that is more fiting for our purposes.

For a $k$-lattice $L=\Box_{j=1}^kP^j$, 
let $\pi_j:\Box_{j=1}^kP^j\rightarrow P^j$ be the $j$-th projection of $L$ to its $j$-th factor $P^j$.
It is well known that we can always embed a median graph into a lattice
in an irredundant way, but let us fast prove this in the context our own notation.

\begin{lemma}\label{optemb}
 Let $k$ be the lattice dimension of a median graph $M$. Then there is an isometric embedding $\phi$ of $M$ 
 into a $k$-lattice $\Box_{j=1}^kP^j$ such that for every $j=1,\ldots,k$ and every $u_j\in V(P^j)$,
 $$\pi_j^{-1}(u_j)\cap \phi(V(M))\neq\emptyset.$$
\end{lemma}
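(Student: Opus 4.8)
The plan is to start from \emph{any} isometric embedding of $M$ into a $k$-lattice and then truncate each path factor down to the sub-interval actually used by the embedding. First I would invoke the definition of lattice dimension to fix an isometric embedding $\phi$ of $M$ into some $k$-lattice $L=\Box_{j=1}^kP^j$. For each $j$ I set $S_j:=\pi_j(\phi(V(M)))\subseteq V(P^j)$, the set of vertices of $P^j$ that are hit in the $j$-th coordinate. The whole point will be to show that the vertices of $P^j$ missing from $S_j$ can only sit at the two \emph{ends} of the path, so that deleting them changes no distance.

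The key step is to argue that each $S_j$ is an interval (sub-path) of $P^j$. Since $\phi$ is isometric it sends edges of $M$ to edges of $L$ (an edge $uv$ has $d_M(u,v)=1=d_L(\phi(u),\phi(v))$), and as $M$ is connected the image $\phi(V(M))$ is a connected subgraph of $L$. Each projection $\pi_j$ is a weak graph homomorphism: an edge of $L$ changes exactly one coordinate, so $\pi_j$ either carries it to an edge of $P^j$ or collapses it to a single vertex. Hence $\pi_j$ maps the connected set $\phi(V(M))$ onto a connected subset of the path $P^j$, that is, onto a sub-path $S_j=[\ell_j,r_j]$. In particular there are no ``interior'' vertices between $\ell_j$ and $r_j$ that are missed; every unused vertex lies strictly below $\ell_j$ or strictly above $r_j$.

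Next I would replace each $P^j$ by its induced sub-path $P^j[S_j]$ and keep the same map $\phi$, now regarded as a map into $L':=\Box_{j=1}^kP^j[S_j]$. Because distance in a Cartesian product of paths is the sum of the coordinate distances, and because $S_j$ is a convex (interval) subset of $P^j$, so that $d_{P^j[S_j]}$ agrees with $d_{P^j}$ on $S_j$, the map $\phi$ stays isometric into $L'$; and by construction every vertex of each $P^j[S_j]$ now lies in the projection of the image, which is precisely the required condition $\pi_j^{-1}(u_j)\cap\phi(V(M))\neq\emptyset$. The only place I would use minimality of $k$ is to confirm that $L'$ is genuinely a $k$-lattice: if some $S_j$ were a single vertex, coordinate $j$ would be constant on the image and dropping that factor would yield an isometric embedding of $M$ into $k-1$ paths, contradicting that $k$ is the lattice dimension; hence every $P^j[S_j]$ is a non-trivial path. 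The main (mild) obstacle is exactly the interval/connectivity argument for $S_j$, since deleting an interior unused vertex would shorten cross-distances and destroy isometry, whereas truncating the ends is harmless.
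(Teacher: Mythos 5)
Your proof is correct and is essentially the paper's argument in constructive form: the paper picks an embedding into a $k$-lattice with the minimum number of lattice vertices and notes that an unused end vertex of any factor could be deleted (contradicting minimality), then uses connectedness of $\phi(M)$ to cover the interior vertices, while you start from an arbitrary embedding and directly truncate each factor to the interval $S_j=\pi_j(\phi(V(M)))$, which is the same truncation made explicit. Your added checks --- that projections of the connected image are sub-paths, that truncating to a convex interval preserves isometry, and that no factor collapses to a single vertex by minimality of $k$ --- are exactly the details the paper's extremal argument leaves implicit, so no gap on either side.
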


\begin{proof}
 Let $\phi:M\rightarrow L=\Box_{j=1}^kP^j$ be an isometric embedding into a $k$-lattice $L$ with $V(H)$ minimal. 
 Then, for every $j=1,\ldots,k$ and each of the two ends $l_j\in V(P^j)$ it must be $\pi_j^{-1}(l_j)\cap \phi(V(M))\neq\emptyset$,
 otherwise we can embed $M$ into $(\Box_{h\neq j}P^h)\Box (P^j-l_j)$, a contradiction to the choice of $L$.
 The Lemma~follows by the fact that $\phi(M)$ is a connected subgraph of $L$.
\end{proof}

We say that two $\Theta$-classes $F_{x_1x_2}, F_{x'_1x'_2}$ of a median graph $M$ \emph{cross} if $W_{x_ix_{3-i}}\cap W_{x'_jx'_{3-j}}\neq \emptyset$ for any $i,j=1,2$. 
Otherwise, if there is a choice $i,j \in \{1,2\}$ such that $W_{x_ix_{3-i}}\subseteq W_{x'_jx'_{3-j}}$ and $W_{x_{3-i}x_i}\subseteq W_{x'_{3-j}x'_j}$, we call $F_{x_1x_2}, F_{x'_1x'_2}$ \emph{laminar}.
Two $U$-sets are laminar if their adjacent $\Theta$-classes are laminar.

A \emph{$\Theta$-system} of $M$ is a set of $\Theta$-classes of it. 
We call a $\Theta$-system $\FFF$ of $M$ a \emph{strong direction} in $M$ if all of its members are pairwise laminar
and for every $F_{a_1b_1},F_{a_2b_2},F_{a_3b_3}\in \FFF$, if $W_{a_1b_1}\subseteq W_{a_2b_2}$ and $W_{a_1b_1}\subseteq W_{a_3b_3}$, then $W_{a_2b_2}\subseteq W_{a_3b_3}$ or $W_{a_3b_3}\subseteq W_{a_2b_2}$ 
(or in other words, if there is a $\subseteq$-chain containing a $W$-set from every pair of complementary $W$-sets corresponding to the $\Theta$-classes of $\FFF$, with their complementary $W$-sets forming a $\supseteq$-chain).
For a mapping $\psi:G\rightarrow H$ and an edge $e\in E(H)$,
by $\psi^{-1}(e)$ we mean $\{uv\in E(G)\mid \psi_j(u)\psi_j(v)=e\}.$
Notice that for a $k$-lattice $H=\Box_{j=1}^kP^j$, the family $\{\pi_j^{-1}(e_j)\mid e_j\in E(P^j)\}$ is a strong direction in $H$. 
When embedded into a lattice, a median graph inherits in a natural way the lattice's strong directions.

\begin{lemma}\label{embedding}
 Let $\phi:M \rightarrow L$ be an isometric embedding of a median graph $M$ into a $k$-lattice $L=\Box_{j=1}^kP^j$ as in Lemma~\ref{optemb}. 
 Then for every $j=1,\ldots,k$ the following are true:
 \begin{enumerate}[(i)]
  \item for every $e_j\in E(P^j)$,  $\phi^{-1}(\pi_j^{-1}(e_j))$ is a $\Theta$-class of $M$
  \item the family $\Sigma_j=\{\phi^{-1}(\pi_j^{-1}(e_j))\mid e_j\in E(P^j)\}$ is a strong direction of $M$
  $\phi^{-1}(\pi_j^{-1}(e_j))$ is a subset of $\phi^{-1}(\pi_j^{-1}(u_j))$.
 \end{enumerate}
\end{lemma}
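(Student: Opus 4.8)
The plan is to exploit that the $\Theta$-relation is defined purely through distances, so it is transported faithfully by an isometric embedding. For any two edges $e=xy$ and $f=uv$ of $M$, the bipartite characterisation gives $e\,\Theta\,f$ iff $d_M(x,u)=d_M(y,v)$ and $d_M(x,v)=d_M(y,u)$; since $\phi$ is isometric these four distances equal the corresponding distances between images, so $e\,\Theta\,f$ in $M$ if and only if $\phi(e)\,\Theta\,\phi(f)$ in $L$. As noted in the excerpt, the $\Theta$-classes of the median graph $L=\Box_{j=1}^kP^j$ are exactly the edge-layers $\pi_j^{-1}(e_j)$, $e_j\in E(P^j)$ (each edge of a path being its own class, grouped coordinatewise in the product). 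Since every edge of $M$ is sent by $\phi$ to an edge of $L$ lying in a unique such layer, the sets $\phi^{-1}(\pi_j^{-1}(e_j))$ partition $E(M)$, and by the transported equivalence two edges of $M$ lie in the same block precisely when they are $\Theta$-equivalent in $M$. Hence every \emph{nonempty} block is exactly a $\Theta$-class of $M$.

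First I would settle (i) by proving each block is nonempty, which is the one place where Lemma~\ref{optemb} is essential. Write the vertices of $P^j$ as $1,\dots,m$ in order and fix an edge $e_j=\{t,t+1\}$. Lemma~\ref{optemb} guarantees that both vertex-layers $\pi_j^{-1}(t)$ and $\pi_j^{-1}(t+1)$ meet $\phi(V(M))$, so $\phi(M)$ has vertices on both sides of the cut $\pi_j^{-1}(e_j)$, which separates $\pi_j^{-1}(\{1,\dots,t\})$ from $\pi_j^{-1}(\{t+1,\dots,m\})$ in $L$. Since $M$ is connected and $\phi$ is an isometric embedding, $\phi(M)$ is a connected subgraph of $L$ and must therefore contain an edge crossing this cut; that edge lies in $\pi_j^{-1}(e_j)$, so $\phi^{-1}(\pi_j^{-1}(e_j))\neq\emptyset$. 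Combined with the previous paragraph, each $\phi^{-1}(\pi_j^{-1}(e_j))$ is a genuine $\Theta$-class, which is (i).

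For (ii) I would make the $W$-sets explicit. Using the product distance formula $d_L(x,y)=\sum_h d_{P^h}(\pi_h(x),\pi_h(y))$, one checks that for an edge $ab\in\phi^{-1}(\pi_j^{-1}(e_j))$ with $\pi_j(\phi(a))=t$ and $\pi_j(\phi(b))=t+1$, the comparison $d_L(x,\phi(a))<d_L(x,\phi(b))$ reduces to the $j$-th coordinate and holds exactly when $\pi_j(x)\le t$. Transporting through isometry, the corresponding $W$-set in $M$ is $W_{ab}=\{v\in V(M)\mid \pi_j(\phi(v))\le t\}$, with complement $\{v\in V(M)\mid \pi_j(\phi(v))\ge t+1\}$. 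As $t$ ranges over $1,\dots,m-1$ these low sides form a $\subseteq$-chain and their complements a $\supseteq$-chain; exhibiting this chain is precisely the condition for $\Sigma_j$ to be a strong direction (pairwise laminarity being the two-class case), mirroring the fact, already noted for $L$, that $\{\pi_j^{-1}(e_j)\}$ is a strong direction of the lattice.

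The hard part will be the bookkeeping that links the vertex-level irredundancy of Lemma~\ref{optemb} to the edge-level nonemptiness needed in (i): the cut-and-connectivity argument is the crux, since without it a block could a priori be empty and fail to be a $\Theta$-class. Everything else is a faithful transport of the manifestly strong-direction structure of the lattice through the isometry. Finally, the stray inclusion closing the statement typechecks only if its left side is read as the $U$-set of the class rather than its edge set; under that reading it records that the vertices of $M$ incident, on the low side, with $\phi^{-1}(\pi_j^{-1}(e_j))$ all project to the endpoint $u_j=t$ of $e_j$, i.e.\ lie in $\phi^{-1}(\pi_j^{-1}(t))$, which is immediate from the description of $W_{ab}$ above.
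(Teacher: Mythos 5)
Your proof is correct, and while it shares the paper's overall strategy of transporting structure through the isometry, the mechanism for (i) is genuinely different. The paper's proof pulls back the two complementary $W$-sets of $L$ determined by $e_j$ through $\phi$, observes that their preimages are complementary $W$-sets of $M$, and invokes Lemma~\ref{theta}(3) to conclude that the edges between them constitute a $\Theta$-class; you instead transport Winkler's distance characterisation of $\Theta$ edge-by-edge, so that the blocks $\phi^{-1}(\pi_j^{-1}(e_j))$ are exactly the fibres of the map sending an edge of $M$ to the $\Theta$-class of $L$ containing its image, and then identify each nonempty fibre with a full $\Theta$-class of $M$ using transitivity of $\Theta$ on both graphs. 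A real merit of your version is that it isolates and proves the nonemptiness of each block via the cut-and-connectivity argument --- this is precisely where Lemma~\ref{optemb} enters, and the paper's two-line proof leaves it implicit (its phrase ``complementary $W$-sets of $M$'' silently presupposes an edge of $M$ crossing the cut, which is exactly what your argument supplies). For (ii), the paper simply says the claim follows from (i) and the fact that the layer system is a strong direction of $L$; your explicit computation $W_{ab}=\{v\in V(M)\mid \pi_j(\phi(v))\le t\}$, exhibiting the $\subseteq$-chain of low sides with complements forming a $\supseteq$-chain, is a fleshed-out version of the same transport that verifies the chain condition in the paper's definition of strong direction directly. Finally, your charitable reading of the dangling clause at the end of the statement (an editing artifact which the paper's proof silently ignores) as a statement about the $U$-set on the low side is consistent with your $W$-set formula, so nothing in the proposal fails; it is, if anything, more careful than the original.
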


\begin{proof}
 \begin{enumerate}[(i)]
  \item Let $e_j=u_jv_j$. Since $\pi_j^{-1}(u_j), \pi_j^{-1}(v_j)$ are complementary $W$-sets of $L$ and $\phi$ is an isometry,
  we have that $\phi^{-1}(\pi_j^{-1}(u_j)), \phi^{-1}(\pi_j^{-1}(v_j))$ are complementary $W$-sets of $M$. Since $\phi^{-1}(\pi_j^{-1}(e_j))$ is the set of edges between them, 
  they constitute a $\Theta$-class of $M$.
  \item Follows from (i) and the fact that $\{\pi_j^{-1}(e_j)\mid e_j\in E(P^j)\}$ is a strong direction in $L$.
 \end{enumerate}

\end{proof}

We call two separations $(U_1,U_2), (W_1,W_2)$ of a graph $G$ \emph{laminar} if there is a choice $i,j \in \{1,2\}$ such that $U_i\subseteq W_j$ and $U_{3-i} \supseteq W_{3-j}$, 
otherwise we say they \emph{cross}. A set of separations is called laminar if all of its members are pairwise laminar separations of $G$. We shall need the following lemma 
from~\cite{stavropoulos2015medianwidth}.

\begin{lemma}\cite{stavropoulos2015medianwidth}\label{lammed}
 Let $(M,\XXX)$ a median decomposition of $G$.  If the $\Theta$-classes $F_{ab}$, $F_{cd}$ are laminar in $M$, 
 then the corresponding separations $(Y_{ab},Y_{ba})$ and $(Y_{cd},Y_{dc})$ are laminar in $G$.
\end{lemma}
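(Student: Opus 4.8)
The plan is to reduce laminarity of the two separations in $G$ to a simple monotonicity of the $Y$-sets under inclusion of the underlying $W$-sets in $M$. First I would unwind the hypothesis that $F_{ab}$ and $F_{cd}$ are laminar. Since $M$ is median, hence bipartite, each $\Theta$-class partitions the vertex set into a complementary pair of $W$-sets: $V(M)=W_{ab}\cup W_{ba}$ with $W_{ab}\cap W_{ba}=\emptyset$, and likewise $V(M)=W_{cd}\cup W_{dc}$. Laminarity of the two classes means that, after possibly relabelling endpoints, one $W$-set of the first class is contained in one $W$-set of the second; without loss of generality $W_{ab}\subseteq W_{cd}$, and taking complements in $V(M)$ this is equivalent to $W_{dc}\subseteq W_{ba}$.

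The key observation is then that the assignment $W\mapsto \bigcup_{x\in W}X_x$ is monotone with respect to inclusion, since it is merely a union of bags indexed by $W$. Applying this to the two inclusions from the previous step gives
$$Y_{ab}=\bigcup_{x\in W_{ab}}X_x\subseteq \bigcup_{x\in W_{cd}}X_x=Y_{cd}, \qquad Y_{dc}=\bigcup_{x\in W_{dc}}X_x\subseteq \bigcup_{x\in W_{ba}}X_x=Y_{ba}.$$

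It remains to recognise these two inclusions as the definition of laminarity for separations of $G$. By Lemma~\ref{sep}, both $(Y_{ab},Y_{ba})$ and $(Y_{cd},Y_{dc})$ are indeed separations of $G$. Setting $(U_1,U_2)=(Y_{ab},Y_{ba})$ and $(W_1,W_2)=(Y_{cd},Y_{dc})$ and choosing $i=j=1$, the displayed inclusions read exactly $U_1\subseteq W_1$ and $U_2\supseteq W_2$, which is precisely the condition for the two separations to be laminar in $G$.

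I do not expect a genuine obstacle here: the statement is essentially a transport of a nesting relation from the model median graph $M$ to the decomposed graph $G$, and monotonicity of the $Y$-sets makes this automatic. The only point requiring care is the bookkeeping of the four possible index choices in the laminarity definition, i.e.\ pinning down which $W$-sets nest and orienting the resulting $Y$-set inclusions consistently (one $\subseteq$, one $\supseteq$) so that they match the separation-laminarity convention rather than collapsing to an equality.
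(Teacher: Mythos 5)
Your proof is correct: laminarity of the $\Theta$-classes gives (up to relabelling) $W_{ab}\subseteq W_{cd}$ and hence, by complementation in the bipartite graph $M$, $W_{dc}\subseteq W_{ba}$, and since the $Y$-sets are unions of bags indexed by the $W$-sets, the inclusions $Y_{ab}\subseteq Y_{cd}$ and $Y_{dc}\subseteq Y_{ba}$ follow at once, which is precisely the laminarity condition for the separations $(Y_{ab},Y_{ba})$ and $(Y_{cd},Y_{dc})$ of $G$. The paper states this lemma only by citation to~\cite{stavropoulos2015medianwidth} without reproducing a proof, and your argument is the intended, standard one --- you even implicitly repaired the misprint in the paper's definition of laminar $\Theta$-classes, whose second inclusion should read $W_{x_{3-i}x_i}\supseteq W_{x'_{3-j}x'_j}$ (as written, with $\subseteq$, it would force the two $W$-sets to be equal).
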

 
We are ready to present the analogue of Theorem~\ref{imwchar}, which roughly says that the $i$-latticewidth of a graph corresponds to the largest ``intersection'' of the best choice of
$i$ path decompositions of the graph. More specifically, in the following theorem let us denote path decompositions $(P^j,\ZZZ^j)$ with $\DDD^j=(P^j,\ZZZ^j)$.

\begin{theorem}\label{ilwchar}
 For any graph $G$ and any integer $i\geq 1$, $$\lw_i(G)=\min\limits_{\DDD^1,\ldots,\DDD^i\in \PPP^G}\max\{|\bigcap_{j=1}^iZ^j_{u_j}|\mid u_j\in V(P^j)\}.$$
\end{theorem}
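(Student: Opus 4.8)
The plan is to prove both inequalities, mirroring the structure of Theorem~\ref{imwchar}. For the inequality $\lw_i(G) \leq \min\max|\bigcap_j Z^j_{u_j}|$, I would start from any $i$ path decompositions $\DDD^1,\ldots,\DDD^i \in \PPP^G$ and assemble them into a single $i$-lattice decomposition. Set $L = \Box_{j=1}^i P^j$ and, for each lattice vertex $\bold{u} = (u_1,\ldots,u_i)$, define the bag $X_{\bold{u}} := \bigcap_{j=1}^i Z^j_{u_j}$. Property (M1) follows because for an edge $uv$ of $G$ the subpaths $Z^{j,-1}(u)$ and $Z^{j,-1}(v)$ meet in each $P^j$ (they share a bag by (P1)), so choosing $u_j$ in their common part for every $j$ yields a vertex $\bold{u}$ with $u,v\in X_{\bold{u}}$. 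For (M2), note that $\bold{u}\in X^{-1}(v)$ iff $u_j\in Z^{j,-1}(v)$ for all $j$, so $X^{-1}(v)=\Box_{j=1}^i Z^{j,-1}(v)$ is a Cartesian product of subpaths, hence convex in $L$ by Lemma~\ref{cartmed}. Thus $(L,\XXX)$ is an $i$-lattice decomposition whose width is exactly $\max_{\bold{u}}|\bigcap_j Z^j_{u_j}|$, and minimising over the choice of the $\DDD^j$ gives the desired bound.

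For the reverse inequality I would take an optimal $i$-lattice decomposition $\DDD=(M,\XXX)$ of width $\lw_i(G)$ and, using Lemma~\ref{optemb}, fix an irredundant isometric embedding $\phi:M\to L=\Box_{j=1}^kP^j$ with $k\leq i$. For each $j$ define a path decomposition $\DDD^j=(P^j,\ZZZ^j)$ by letting $Z^j_{u_j}:=\bigcup\{X_a : \pi_j(\phi(a))=u_j\}$, the union of the bags over the $j$-th slice. Property (P1) is inherited from (M1); for (P2) observe that $\{u_j : v\in Z^j_{u_j}\}=\pi_j(\phi(X^{-1}(v)))$ is the image of the connected (convex) set $X^{-1}(v)$ under the homomorphism $\pi_j\circ\phi$, hence a subpath of $P^j$. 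If $k<i$ I would pad with $i-k$ trivial one-vertex path decompositions whose only bag is $V(G)$; this is harmless, since it does not change any intersection $\bigcap_j Z^j_{u_j}$.

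The heart of the argument, and the step I expect to be the main obstacle, is to show that for every lattice vertex $\bold{u}=(u_1,\ldots,u_k)$ the set $S:=\bigcap_{j=1}^k Z^j_{u_j}$ is contained in a single bag $X_a$ of $M$. I would prove that the convex sets $\{X^{-1}(v)\}_{v\in S}$ are pairwise intersecting and then invoke the Helly property (Lemma~\ref{helly}) to obtain a common vertex $a\in\bigcap_{v\in S}X^{-1}(v)$, which gives $S\subseteq X_a$ and hence $|S|\leq\lw_i(G)$. For the pairwise intersection, suppose $X^{-1}(v)\cap X^{-1}(w)=\emptyset$ for some $v,w\in S$. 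Since disjoint convex sets of a median graph are separated by complementary halfspaces, there is an edge $ab$ with $X^{-1}(v)\subseteq W_{ab}$ and $X^{-1}(w)\subseteq W_{ba}$. Now every $\Theta$-class of $M$, in particular $F_{ab}$, equals $\phi^{-1}(\pi_{j_0}^{-1}(e))$ for some coordinate $j_0$ and edge $e\in E(P^{j_0})$ by Lemma~\ref{embedding}(i), because each edge of $M$ is mapped by $\phi$ to an edge of $L$ lying in exactly one factor, so the families $\Sigma_1,\ldots,\Sigma_k$ jointly exhaust all $\Theta$-classes. As in the proof of Lemma~\ref{embedding}(i), the complementary halfspaces $W_{ab},W_{ba}$ are the $\phi$-preimages of the two rays into which $e$ splits $P^{j_0}$; consequently $\pi_{j_0}(\phi(X^{-1}(v)))$ and $\pi_{j_0}(\phi(X^{-1}(w)))$ lie in disjoint rays of $P^{j_0}$. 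But $v,w\in Z^{j_0}_{u_{j_0}}$ forces $u_{j_0}$ to belong to both projections, a contradiction.

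Combining the two directions yields the stated equality. The only genuinely nontrivial ingredient beyond the stated lemmata is the separation of disjoint convex sets of a median graph by a $\Theta$-class, a standard property of the convexity of median graphs; once this is combined with the fact that the strong directions of Lemma~\ref{embedding} exhaust all $\Theta$-classes, the Helly property of Lemma~\ref{helly} does the rest.
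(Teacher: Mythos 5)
Your proof is correct, and its first direction is essentially identical to the paper's: the same product lattice $L=\Box_{j=1}^iP^j$ with bags $X_{\bold{u}}=\bigcap_{j=1}^iZ^j_{u_j}$, with (M2) checked via $X^{-1}(v)=\Box_{j=1}^iZ^{j,-1}(v)$ and Lemma~\ref{cartmed}. The reverse direction follows the paper's skeleton (embed an optimal decomposition via Lemma~\ref{optemb}, slice it into path decompositions $Z^j_{u_j}=\bigcup_{\pi_j(\phi(a))=u_j}X_a$, pad with trivial decompositions), but you justify the two key points differently, and in one case more completely. First, where the paper certifies that each $(P^j,\ZZZ^j)$ is a path decomposition by appealing to strong directions and Lemma~\ref{lammed}, you verify (P2) directly by observing that $\{u_j : v\in Z^j_{u_j}\}=\pi_j(\phi(X^{-1}(v)))$ is the image of a connected set under a map sending edges to edges or vertices, hence a subpath --- shorter and cleaner. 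Second, and more substantially: the paper establishes $X_a=\bigcap_{j}Z^j_{u_j}$ only for transversals of the form $u_j=\pi_j(\phi(a))$, i.e.\ those realised by a vertex of $\phi(M)$, and disposes of arbitrary transversals with the remark that the maximising ones ``clearly correspond to the elements of $\XXX$''; since $\phi(M)$ need not contain every vertex of $L$, there genuinely are unrealised transversals with non-empty intersection, so something must be said there. You prove exactly the needed statement --- every transversal intersection $S=\bigcap_jZ^j_{u_j}$ lies in a single bag --- by showing the convex sets $\{X^{-1}(v)\}_{v\in S}$ pairwise intersect and invoking the Helly property (Lemma~\ref{helly}), which the paper states but never uses in this proof. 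Your pairwise-intersection argument is sound: the families $\Sigma_1,\ldots,\Sigma_k$ do exhaust the $\Theta$-classes of $M$ (each edge of $M$ maps to an edge of $L$ in a unique factor, and $\Theta$-classes partition $E(M)$, so Lemma~\ref{embedding}(i) identifies its class), and the two halfspaces of such a class pull back from disjoint rays of $P^{j_0}$, contradicting $u_{j_0}$ lying in both projections. The one ingredient you use that lies outside the paper's stated lemmata is the separation of disjoint convex sets of a median graph by complementary halfspaces $W_{ab},W_{ba}$ (the Kakutani property of median convexity); this is indeed standard --- it follows by taking $u\in X^{-1}(v)$, $w$ the neighbour of $u$ on a geodesic realising the distance to $X^{-1}(w)$, and checking that convexity forces $X^{-1}(v)\subseteq W_{uw}$ and $X^{-1}(w)\subseteq W_{wu}$ --- but since the paper never states it, you should cite it or include that short derivation. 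Net effect: your route buys full rigour at precisely the step the paper glosses, at the price of one extra (standard) median-graph fact.
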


\begin{proof}
 Let $$\lambda:=\min\limits_{\DDD^1,\ldots,\DDD^i\in \PPP^G}\max\{|\bigcap_{j=1}^iZ^j_{u_j}|\mid u_j\in V(P^j)\}.$$
 For $\DDD^1,\ldots,\DDD^i\in \PPP^G$, consider the pair $(L,\XXX)$, where $L=\Box_{j=1}^iP^j$ and $X_{(u_1,\ldots,u_i)}=\bigcap_{j=1}^iZ^j_{u_j}$.
 Note that (T1) for $\DDD^1,\ldots,\DDD^i$ implies (M1) for $(L,\XXX)$. Clearly, for every $v\in V(G)$, we have
 $$X^{-1}(v)=\Box_{j=1}^iZ^{j^{-1}}(v),$$ so by Lemma~\ref{cartmed}, (M2) also holds. 
 Then $(L,\XXX)$ is a valid $i$-lattice decomposition of $G$, therefore $$\lw_i(G) \leq \max\{\bigcap_{j=1}^iZ^j_{u_j}\mid u_j\in V(P^j)\}.$$
 Since $\DDD^1,\ldots,\DDD^i$ were arbitrary, it follows that $\lw_i(G)\leq \lambda$. 
 
 For the opposite implication, consider an $i$-lattice decomposition $(M,\XXX)$ of $G$ of width $\lw_i(G)$. 
 Let $k\leq i$ be the lattice dimension of $M$ and let $\phi:M\rightarrow L=\Box_{j=1}^kP^j$ be an isometric embedding as per Lemma~\ref{optemb}.
 By Lemma~\ref{embedding}(i),(ii), each $$\Sigma_j=\{\phi^{-1}(\pi_j^{-1}(e_j))\mid e_j\in E(P^j)\}$$ is a strong direction in $M$. 
 By the definition of a strong direction and Lemma~\ref{lammed}, there are path decompositions $\DDD^j=(P^j,\ZZZ^j)$ of $G$
 obtained by each $\Sigma_j$ where for each $u_j\in V(P^j)$ we have
 $$Z_{u_j}^j=\bigcup_{\pi_j(\phi(a))=u_j}X_a.$$
 Every vertex of $L$ is exactly the intersection of all the sublattices of $L$ of codimension $1$ that contain it. 
 In other words, for each $a\in V(M)$, we have $$\{a\}=\bigcap_{\substack{\pi_j(\phi(a))=u_j\\ j=1,\ldots,k}}\phi^{-1}(\pi_j^{-1}(u_j)).$$  
 It follows that
 $$X_a=\bigcap_{\substack{\pi_j(\phi(a))=u_j\\ j=1,\ldots,k}}Z_{u_j}^j.$$
 The trasversals from $\ZZZ^1,\ldots,\ZZZ^i$ that can achieve maximal size for the intersection of its elements, clearly correspond to the elements of $\XXX$.
 For $\lambda$, consider the decompositions $\DDD^1,\ldots,\DDD^k$
 together with the trivial decomposition of $G$ consisting of one bag being the whole $V(G)$ and repeated $i-k$ times. Then
  $$\lambda \leq \max\{|\bigcap_{j=1}^{k}Z^j_{u_j}|\mid u_j\in V(P^j)\}=\max\{|X_a|\mid a\in V(M)\}=\lw_i(G).$$  
\end{proof}

\section{Cops and Robber}\label{sec:game}
In this section, we describe in detail the game we sketched in Section \ref{sec:intro}, where the robber player plays against $i$ cop players. 
As in the case of treewidth and pathwidth, we will examine two variations of the game: 
one where the robber is \emph{visible} to each cop player and one where the robber is \emph{invisible} to them. 

\subsection{$i$ Cop Players vs a Visible Robber}\label{subsec:visible}
Let us precisely describe the \emph{``$i$-Cops and visible Robber''} game on a graph $G$ with \emph{cooperation at most $k$}, played by the $i$ \emph{Cop players} and a visible \emph{Robber player}. 
Let $X\subseteq V(G)$. An \emph{$X$-flap} is the vertex set of a component of $G\setminus X$.

A \emph{position} of the game is an $i$-tuple of pairs $((Z^1,R^1),\ldots,(Z^i,R^i))$ where $Z^1,\ldots,Z^i,R^1,\ldots,R^i\subseteq V(G)$. A \emph{move} is a triple $(j,Z,R)$,
where $j\in\{1,\ldots,i\}$ and $Z,R\subseteq V(G)$. The initial position of the game is always of the form $((Z_0^1,R_0^1),\ldots,(Z_0^i,R_0^i))=((\emptyset,C),\ldots,(\emptyset,C)$, 
where $C$ is a connected component of $G$. 
A \emph{play} is a sequence of moves and their corresponding positions. 
The $l$-th \emph{round} of a play starts from a position $((Z_{l-1}^1,R_{l-1}^1),\ldots,(Z_{l-1}^i,R_{l-1}^i))$. 
The players then make a move and a new position $((Z_l^1,R_l^1),\ldots,(Z_l^i,R_l^i))$ is obtained according to the following steps:
\begin{enumerate}[(i)]
 \item the \emph{robber player} chooses a cop team $j\in \{1,\ldots,i\}$ with $R_{l-1}^j\nsubseteq Z_{l-1}^j$, to be the next to move
 (if $R_{l-1}^j\subseteq Z_{l-1}^j$, the $j$-th team can't be chosen by the robber)
 \item the \emph{$j$-th cop player} then chooses a new vertex set $Z_l^j$ for the $j$-th cop team, 
 based only on knowledge of sets $Z=Z_{l-1}^j$ and $R=R_{l-1}^j$ 
 (each cop player can't see though in which round the overall game is at any point, how many times they were chosen or what the other cop players have played so far, 
 but they remember what was their last choice $Z$ and the choice $R$ of the robber in the same round)
 \item if $R_{l-1}^j\subseteq Z_l^j$, the \emph{robber player} keeps the same $R_l^j=R_{l-1}^j$, 
 otherwise the robber chooses a $Z_l^j$-flap $R_l^j$ such that $R_{l-1}^j$, $R_l^j$ are subsets of a common $(Z_{l-1}^j\cap Z_l^j)$-flap
 and such that $R_l^j$ \emph{intersects} $\bigcap_{j'\neq j}R_{l-1}^{j'}$
 \item the move $(j,Z_l^j,R_l^j)$ \emph{updates} the position $((Z_{l-1}^1,R_{l-1}^1),\ldots,(Z_{l-1}^i,R_{l-1}^i))$ of the game
 into $((Z_l^1,R_l^1),\ldots,(Z_l^i,R_l^i))$ by replacing the $j$-th pair $(Z_{l-1}^j,R_{l-1}^j)$ with $(Z_l^j,R_l^j)$ and leaving the rest of the position as is, 
 namely for every $j'\neq j$, $(Z_l^{j'},R_l^{j'})=(Z_{l-1}^{j'},R_{l-1}^{j'})$.
\end{enumerate}

\noindent
Observe that by (iii), the set $\bigcap_{j=1}^iR_l^{j}$, which contains the vertices that the robber can occupy, is non-empty for every round $l$ of the game. The play ends when it arrives at a position $((Z^1,R^1),\ldots,(Z^i,R^i))$ with $|\bigcap_{j=1}^iZ^j|> k$, in which case the robber wins, 
or when it arrives at a position with $R^j\subseteq Z^j$ for every $j=1,\ldots,i$ and $|\bigcap_{j=1}^iZ^j|\leq k$, in which case the cop players win.
Otherwise, when the game never ends, the robber player also wins. 

If the cop players have a winning strategy in the above game, 
we say that ``$i$ teams of cops \emph{with vision} can search the graph with cooperation at most $k$''. 
If the cop players can, in addition, always win in such a way that $R_l^j\subseteq R_{l-1}^j$ for every $j=1,\ldots,i$ and every round $l$, 
we say that ``$i$ teams of cops \emph{with vision} can \emph{monotonely} search the graph with cooperation at most $k$''. 

Notice that for $i=1$, the game clearly becomes the classical
``Cops and Robber'' game with one cop player and a visible robber, which characterises treewidth (recall that treewidth is the $1$-medianwidth). 
Analogously, monotone winning strategies for the $i$ cop players characterise $i$-medianwidth.

\begin{theorem}\label{visible}
 A graph $G$ can be monotonely searched with cooperation at most $k$ by $i$ teams of cops \emph{with vision} if and only if $\mw_i(G)\leq k$. 
\end{theorem}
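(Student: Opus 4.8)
The plan is to prove both implications by routing through Theorem~\ref{imwchar}, which identifies $\mw_i(G)$ with the smallest possible value of $\max_{t_1,\dots,t_i}|\bigcap_{j=1}^i Z^j_{t_j}|$ taken over all choices of $i$ tree decompositions $\DDD^1,\dots,\DDD^i$ of $G$. Throughout I will use as a black box the classical case $i=1$: a single cop team with vision monotonely searches $G$ with cooperation at most $k$ iff $\tw(G)\le k-1=\mw_1(G)-1$, together with the standard two-way dictionary between monotone winning strategies and (rooted) tree decompositions~\cite{seymour1993graph}.

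\emph{Soundness} ($\mw_i(G)\le k\Rightarrow$ the teams win). By Theorem~\ref{imwchar} I fix tree decompositions $\DDD^1,\dots,\DDD^i$ realising the minimum, so that $|\bigcap_{j=1}^i Z^j_{t_j}|\le k$ for every transversal $(t_1,\dots,t_i)$. I root each $T^j$ and let team $j$ play the classical treewidth strategy on $\DDD^j$ against its own visible shadow $R^j$: knowing only the local state $(Z^j,R^j)$, team $j$ locates the node $t_j$ whose bag it currently occupies, identifies the subtree of $T^j-t_j$ into which $R^j$ has fled, and advances one node along the edge toward that subtree. This is a legal strategy since it is a function of $(Z^j,R^j)$ alone, and at every round team $j$ occupies a bag $Z^j_{t_j}$, so the cooperation never exceeds $\max_{t_1,\dots,t_i}|\bigcap_j Z^j_{t_j}|\le k$. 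Monotonicity and finite termination of each team's play are exactly the classical monotonicity of the treewidth game; since the robber is forced at every round to activate some team that has not yet cornered her, each team is cornered after finitely many of its activations and the teams reach a winning position.

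\emph{Completeness} (the teams win $\Rightarrow\mw_i(G)\le k$). I fix a monotone winning strategy and use that each team's strategy $\sigma_j$ depends only on its local state $(Z^j,R^j)$. Against a visible robber, the monotone plays of team $j$ in isolation therefore unfold as a rooted search tree $T^j$ whose nodes carry the occupied cop sets; by the classical strategy-to-decomposition correspondence, $\DDD^j:=(T^j,\ZZZ^j)$ is a genuine tree decomposition of $G$, with (T1) coming from the fact that each edge is covered at the moment the corresponding flap is cleared and (T2) coming from monotonicity (a vertex, once inside the cop set, remains there along a connected portion of the play). By Theorem~\ref{imwchar} it then suffices to establish the global bound $\max_{t_1,\dots,t_i}|\bigcap_{j=1}^i Z^j_{t_j}|\le k$.

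\emph{The main obstacle} is precisely this all-transversals bound, and this is where the multi-team structure bites. The natural route is contrapositive: given a transversal $(t_1,\dots,t_i)$ with $S:=\bigcap_{j=1}^i Z^j_{t_j}$ of size $>k$, I would exploit the independence of the $\sigma_j$ to feed each team exactly the local history that drives it from the root of $T^j$ to $t_j$, steering the game to a reached position with cop sets $Z^1_{t_1},\dots,Z^i_{t_i}$ and hence cooperation $|S|>k$, contradicting that the teams win. The subtlety is rule~(iii): whenever the robber activates a team she must keep her new flap meeting $\bigcap_{j'\neq j}R^{j'}$, so she cannot simply pin herself to a single vertex of $S$ and drive the teams one at a time, since once a team reaches $t_j$ its bag $Z^j_{t_j}\supseteq S$ swallows all of $S$ and removes it from that team's flap. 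Reconciling the $i$ independent per-team escape routes into one globally consistent robber play that keeps $\bigcap_j R^j\neq\emptyset$ at every round is the heart of the argument; I expect to resolve it by working inside the product median graph $\Box_{j=1}^i T^j$ (median by Lemma~\ref{cartmed}), where the per-team requirements become convex boxes, and invoking the Helly property of convex sets of median graphs (Lemma~\ref{helly}) to align them into a single reachable configuration. Equivalently, one may package the reachable configurations directly into an $i$-median decomposition with bags $\bigcap_j Z^j_{t_j}$, using monotonicity for (M2) and the same Helly alignment for (M1); checking that this decomposition has tree dimension at most $i$ and width exactly the cooperation is then routine.
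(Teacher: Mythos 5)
Your soundness direction and your extraction of a tree decomposition $(T^j,\ZZZ^j)$ from each cop player's monotone local strategy $\sigma^j$ coincide with the paper's own proof, and both steps are fine. The genuine gap is the step you yourself single out as ``the heart of the argument'': the all-transversals bound $\max\{|\bigcap_{j=1}^i Z^j_{t_j}| \mid t_j\in V(T^j)\}\le k$. You do not prove it --- ``I expect to resolve it'' is a plan, not an argument --- and the plan you sketch would not resolve it. Both of your packagings (Helly alignment inside $\Box_{j=1}^i T^j$, or directly building the median decomposition with bags $\bigcap_{j=1}^i Z^j_{t_j}$) only reprove the easy direction of Theorem~\ref{imwchar}: the product pair is a valid $i$-median decomposition with no Helly input at all, since (M1) follows by picking, for each edge $uv\in E(G)$, a node $t_j$ in each $T^j$ with $u,v\in Z^j_{t_j}$ (property (T1) of each factor), and (M2) follows from Lemma~\ref{cartmed} because $X^{-1}(v)=\Box_{j=1}^i (Z^j)^{-1}(v)$. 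That yields $\mw_i(G)\le \max\{|\bigcap_{j=1}^i Z^j_{t_j}|\}$, which is true but useless until the right-hand side is bounded by $k$ --- and that bound is precisely where the hypothesis that the cop strategy is \emph{winning} must enter. Lemma~\ref{helly} is a statement about pairwise-intersecting convex sets of a median graph; it says nothing about which positions of the game are reachable under rule (iii), which is the actual question.

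The paper closes this step game-theoretically, not via convexity: since each $\sigma^j$ is a function of the local pair $(Z,R)$ alone, and the cop players can observe neither the round number nor one another, the robber may schedule the order of activations and her flap choices so as to realize \emph{any} transversal $(Z^1_{t_1},\ldots,Z^i_{t_i})$ with $\bigcap_{j=1}^i Z^j_{t_j}\neq\emptyset$ as an actual position of a play against $\sigma$; a transversal with intersection of size greater than $k$ is then a position the robber can force, contradicting that $\sigma$ is winning. (The paper dispatches this as a one-line observation; making it precise means exhibiting the scheduling --- steering each team down the branch of its strategy tree towards $t_j$ while keeping the intersection condition of rule (iii) satisfiable round by round --- which is combinatorial content about plays of the game that your median-graph machinery does not engage.) The obstruction you correctly identify --- that a team already parked on $Z^j_{t_j}$ has a flap disjoint from $S=\bigcap_{j=1}^i Z^j_{t_j}$ --- is exactly what such a scheduling argument must handle, but it must be handled inside the game by choosing the activation order and intersection witnesses, not by Helly applied to the decomposition. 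As written, your completeness direction is therefore incomplete at its decisive step.
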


\begin{proof}
 Let $\mw_i(G)\leq k$. By Theorem~\ref{imwchar}, there are tree decompositions $(T^1,\ZZZ^1),\ldots,(T^i,\ZZZ^i)\in\TTT^G$ with 
 $$\max\{|\bigcap_{j=1}^iZ^j_{t_j}|\mid t_j\in V(T^j)\}\leq k.$$ For each $T^j$, choose an arbitrary root $r_j$ and 
 consider the respective partial order $\trianglelefteq^j$ obtained by the rooted tree $(T^j,r_j)$ with $r_j$ its $\trianglelefteq^j$-minimal element.
 For every $t_j\in V(T^j)$, let $$V_{t_j}^j:=\bigcup_{\substack{s_j\in V(T^j) \\ t_j\trianglelefteq^j s_j}}Z_{s_j}^j.$$
 Then the cop players have the following winning strategy, which is easily seen to be well-defined:
 \begin{itemize} 
  \item for every $j\in \{1,\ldots,i\}$, the $j$-th player always chooses bags of $(T^j,\ZZZ^j)$, when he is selected to move
  \item the first time the $j$-th cop player is chosen by the robber to move, he chooses $Z_{r_j}^j$
  \item suppose that on the last time the $j$-th cop moved, he chose $Z_{t_j}^j$ and the robber chose the $Z_{t_j}^j$-flap 
  that is a subset of $V_{s_j}^j$ for a unique child $s_j$ of $t_j$ in $T^j$. Then, next time he is selected by the robber to move,
  he chooses $Z_{s_j}^j$.
 \end{itemize}
 Clearly, by the properties of tree decompositions, the above strategy is monotone. The strategy is winning, because for every position $((Z^1,R^1),\ldots,(Z^i,R^i))$ of a play
 we have $Z^j\subseteq \ZZZ^j$ and hence, $|\bigcap_{j=1}^iZ^j|\leq k$.
 
 Conversely, suppose that the cop players have a monotone winning strategy $\sigma$. Since the cop players are invisible to each other, we can view $\sigma$ as $\sigma=(\sigma^1,\ldots,\sigma^i)$, 
 where projection $\sigma^j$ of $\sigma$ corresponds to the individual strategy of $j$-th player.
 Each $\sigma^j$ can be represented by a directed rooted tree $(\vec{T^j},r_j)$ as follows: $r_j$ is labeled with $Z_{r_j}=\emptyset$ and 
 its outgoing arcs are labeled with the vertex sets of the connected components of $G$.
 The rest of the nodes $t_j$ are labeled with $Z_{t_j}^j\subseteq V(G)$ corresponding to subsets of $V(G)$ occupied by cops
 of the $j$-th cop player and the rest of the arcs $(t_j,s_j)$ are labeled with $R_{(t_j,s_j)}^j\subseteq V(G)$ corresponding to possible (legal) moves of the robber player. 
 That is, for every pair of arcs $(t_j,s_j), (s_j, u_j)$ of $\vec{T^j}$, we have that $R_{(t_j, s_j)}^j$ is a $Z_{t_j}^j$-flap, $R_{(s_j, u_j)}^j$ is a $Z_{s_j}^j$-flap, 
 and $R_{(s_j, u_j)}^j, R_{(t_j, s_j)}^j$ are subsets of a common $(Z_{t_j}^j\cap Z_{s_j}^j)$-flap.
 
 Moreover, since $\sigma$ is monotone, for every pair of arcs $(t_j,s_j), (s_j,u_j)$ of $\vec{T^j}$, it must be $R_{(s_j,u_j)}^j\subseteq R_{(t_j, s_j)}^j$. Hence,
 for every arc $(t_j,s_j)$, $R_{(t_j,s_j)}^j$ is a $(Z_{t_j}^j\cap Z_{s_j}^j)$-flap, too (otherwise the robber can break the monotonicity condition). In other words, for every arc $(t_j,s_j)$,
 $(Z_{t_j}^j\cap Z_{s_j}^j)$ separates $R_{(t_j,s_j)}^j\cup Z_{s_j}^j$ from $V(G)\setminus R_{(t_j,s_j)}^j$.  It is easy to see that the satisfaction of the statement of Lemma~\ref{twprop} (iii), 
 combined with the fact that every vertex of the graph is in a $Z_{t_j}^j$ set, is a sufficient condition for the pair $(T^j,\ZZZ^j=(Z_{t_j}^j)_{t_j\in V(T^j)})$ to be a tree decomposition of $G$, 
 where $T^j$ is the underlying undirected tree of $\vec{T^j}$  (the fact that $Z_{r_j}^j=\emptyset$ does not hurt (T1),(T2), and this can even be easily lifted by contracting $r_j$ to one of its children and removing $Z_{r_j}^j$ from $\ZZZ^j$).
 
 Observe that by selecting appropriately the order in which the cop players play and her respective choice $R$ of each round, the robber can force all positions $((Z^1,R^1),\ldots,(Z^i,R^i))$, 
 where $(Z^1,\ldots,Z^i)$ can be any transversal from the families $\ZZZ^1,\ldots,\ZZZ^i$ and satisfying $\bigcap_{j=1}^iZ^j\neq \emptyset$, if the cop players play according to $\sigma$. 
 Since $\sigma$ is a winning strategy for the cop players, we have that $\max\{|\bigcap_{j=1}^iZ^j_{t_j}|\mid t_j\in V(T^j)\}\leq k$ and the proof is complete by Theorem~\ref{imwchar}.
\end{proof}

\subsection{$i$ Cop Players vs an Invisible Robber}\label{subsec:inv}

To describe the \emph{``$i$-Cops and invisible Robber''} game, where the robber is invisible to the cop players, we will need to state it in a slightly alternative fashion. 
Positions, moves, rounds and plays aredefined as in Section~\ref{subsec:visible}. The initial position of the game is always $((Z_0^1,R_0^1),\ldots,(Z_0^i,R_0^i))=((\emptyset,V(G)),\ldots,(\emptyset,V(G))$. 
As in the case of the visible robber, the $l$-th \emph{round} of a play starts from a position $((Z_{l-1}^1,R_{l-1}^1),\ldots,(Z_{l-1}^i,R_{l-1}^i))$. 
Compared to steps  (i)-(iv) from Section~\ref{visible}, the round plays as follows:

\begin{enumerate}[(a)]
 \item same as (i)
 
 \item same as (ii)
 
 \item if $R_{l-1}^j\subseteq Z_l^j$, the \emph{robber player} keeps the same $R_l^j=R_{l-1}^j$, 
 otherwise the robber is automatically assigned with $R_l^j$ being the set of all vertices connected to $R_{l-1}^j$ with a path in the graph $G\setminus(Z_{l-1}^j\cap Z_l^j)$
 
 \item same as (iv).
\end{enumerate}

\noindent
The winning conditions of the game are exactly the same as the ones of Section~\ref{subsec:visible}.

If the cop players have a winning strategy, 
we say that ``$i$ teams of cops \emph{without vision} can search the graph with cooperation at most $k$''. 
If the cop players can, in addition, always win in such a way that $R_{l+1}^j\subseteq R_l^j$ for every $j=1,\ldots,i$ and every round $l$, 
we say that ``$i$ teams of cops \emph{without vision} can \emph{monotonely} search the graph with cooperation at most $k$''. 

Pathwidth corresponds to the $1$-latticewidth and for $i=1$, the game becomes the classical
``Cops and Robber'' game with one cop player and an invisible robber, that characterises pathwidth. 
Similarly, monotone winning strategies for the $i$ cop players characterise $i$-latticewidth.

\begin{theorem}\label{invisible}
 A graph $G$ can be monotonely searched with cooperation at most $k$ by $i$ teams of cops \emph{without vision} if and only if $\lw_i(G)\leq k$. 
\end{theorem}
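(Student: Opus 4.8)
The plan is to mirror the proof of Theorem~\ref{visible}, replacing the tree-decomposition characterisation of Theorem~\ref{imwchar} by the path-decomposition characterisation of Theorem~\ref{ilwchar}, and to exploit the fact that an \emph{invisible} robber makes no flap choices, so that each cop team's play is forced to be linear rather than branching. The base case $i=1$ is the classical pathwidth/invisible-robber equivalence, applied teamwise.

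For the direction $\lw_i(G)\le k\Rightarrow$ the cops win: by Theorem~\ref{ilwchar} there are path decompositions $(P^1,\ZZZ^1),\dots,(P^i,\ZZZ^i)\in\PPP^G$ with $\max\{|\bigcap_{j=1}^iZ^j_{u_j}|\mid u_j\in V(P^j)\}\le k$. I would have the $j$-th cop player sweep $(P^j,\ZZZ^j)$ from one endpoint to the other, advancing one bag along $P^j$ each time the robber selects team $j$. This is exactly the classical monotone clearing strategy behind the $i=1$ case: since consecutive separators $Z^j_{u_j}\cap Z^j_{u_j+1}$ separate the already-swept prefix from the rest, the region $R^j$ can only shrink, so the strategy is monotone, and team $j$ corners its region into its final bag after finitely many of its own moves. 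Because every $Z^j$ occurring in a reachable position is a bag of $(P^j,\ZZZ^j)$, the cooperation $|\bigcap_j Z^j|$ never exceeds $k$. The robber controls the order of moves, but a finished team ($R^j\subseteq Z^j$) can no longer be selected while each unfinished team advances whenever chosen, so the play cannot be prolonged forever and the cops win.

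For the converse, suppose the cops have a monotone winning strategy $\sigma=(\sigma^1,\dots,\sigma^i)$, where $\sigma^j$ is the projection onto the $j$-th (mutually invisible) team. The crucial point distinguishing this from Theorem~\ref{visible} is that under rule (c) the updated region $R^j_l$ is \emph{determined} by $(Z^j_{l-1},R^j_{l-1})$ and the new cop set $Z^j_l$, with no choice left to the robber; hence, starting from $(\emptyset,V(G))$, the states visited by team $j$ form a single deterministic sequence, which I would encode as a directed \emph{path} $\vec{P^j}$ (rather than a tree) with nodes labelled by the cop sets $Z^j_{u_j}$ and arcs labelled by the regions $R^j$. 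Monotonicity gives $R^j_{u_j+1}\subseteq R^j_{u_j}$ along $\vec{P^j}$, and, as in the proof of Theorem~\ref{visible}, each consecutive separator $Z^j_{u_j}\cap Z^j_{u_j+1}$ then separates $R^j_{u_j}$ from the swept part; combined with the fact that every vertex of $G$ must eventually lie in some $Z^j_{u_j}$ (otherwise the robber is never caught), the separation property in the style of Lemma~\ref{twprop}(iii) shows that $(P^j,\ZZZ^j)$ is a genuine path decomposition of $G$. Finally, by choosing the order in which it selects the teams, the robber can drive the play to any position whose cop sets form an arbitrary transversal $(Z^1_{u_1},\dots,Z^i_{u_i})$ of $\ZZZ^1,\dots,\ZZZ^i$, advancing each team independently along its path (a team may be selected as long as it has not finished). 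Since $\sigma$ is winning, no reachable position has cooperation exceeding $k$, so $\max\{|\bigcap_{j=1}^iZ^j_{u_j}|\mid u_j\in V(P^j)\}\le k$, and Theorem~\ref{ilwchar} yields $\lw_i(G)\le k$.

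I expect the main obstacle to be the converse, specifically the clean justification that the invisible-robber dynamics force each team's strategy to be a path rather than a tree, together with the verification that monotonicity upgrades the per-arc separation into the full path-decomposition axioms (T1) and (T2); the forward direction is essentially the $i=1$ pathwidth argument carried out teamwise.
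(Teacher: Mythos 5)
Your proposal is correct and follows essentially the same route as the paper: the forward direction has each team sweep its path decomposition supplied by Theorem~\ref{ilwchar}, and the converse observes that the invisible robber's region updates deterministically, so each team's monotone strategy is a linear sequence whose consecutive separators $Z^j_{m-1}\cap Z^j_m$ yield a path decomposition, after which the robber's ability to force arbitrary transversals bounds all bag intersections by $k$. Your added remarks on termination and on verifying (T1)/(T2) only flesh out steps the paper treats as immediate.
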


\begin{proof}
 The proof is a direct adaptation of the proof of Theorem~\ref{visible}. We still briefly sketch it for the sake of completeness. Let $\lw_i(G)\leq k$. By Theorem~\ref{ilwchar},
 there are path decompositions $(P^1,\ZZZ^1),\ldots,(P^i,\ZZZ^i)$ with 
 $$\max\{|\bigcap_{j=1}^iZ^j_{u_j}|\mid u_j\in V(P^j)\}\leq k.$$ For $j=1,\dots,i$, let $P^j=(u_1^j,\ldots,u_{n_j}^j)$. 
 If the $j$-th cop player plays successively $Z_{u_1^j}^j,\ldots,Z_{u_{n_j}^j}$ each time he is chosen by the robber, then the $i$ cop players win monotonely.
 
 Conversely, suppose that the cop players have a monotone winning strategy. Since the cop players are invisible to each other, 
 the overall strategy of the cop players comprises individual strategies of each cop player.
 For $j=1,\ldots,i$, this individual strategy of the $j$-th cop player can be viewed as a sequence $(Z_1^j,\ldots,Z_{n_j}^j)$, 
 which he will successively follow each time he is chosen to play again. Let $R_m^j$ be the set assigned to the robber player
 in step (c), after the $j$-th cop player has chosen $Z_m^j$ in step (b). By the definition of monotonicity, we have $R_m^j\subseteq R_{m-1}^j$. 
 By (c), this implies that $Z_{m-1}^j\cap Z_m^j$ separates $R_m^j\cup Z_m^j$ from $V(G)\setminus R_m^j$. 
 
 By letting $P^j=(u_1^j,\ldots,u_{n_j}^j)$ and $Z_{u_m^j}^j:=Z_m^j$, we can then easily see that $(P,\ZZZ^j=(Z_{u_m^j}^j)_{u_m^j\in P^j})\in \PPP^G$. 
 Even though the robber can choose any order with which the cop players will play and 
 force any position $((Z^1,R^1),\ldots,(Z^i,R^i))$ with $(Z^1,\ldots,Z^i)$ an arbitrary transversal of $(\ZZZ^1,\ldots,\ZZZ^i)$ satisfying $\bigcap_{j=1}^iZ^j\neq \emptyset$,
 the cop players still always win with cooperation at most $k$.
 By Theorem~\ref{ilwchar}, the path decompositions $(P^1,\ZZZ^1),\ldots,(P^i,\ZZZ^i)$ show that $\lw_i(G)\leq k$.
  
\end{proof}

\section{Concluding Remarks}\label{sec:conc}

All the structural open questions mentioned in~\cite{stavropoulos2015medianwidth} for $i$-medianwidth naturally translate to directions for further research 
for the case of the newly introduced $i$-latticewidth as well.

One of the most notable facts about the classical Cops and Robber game is that the cop player can search a graph with $k$ cops if and only if he can search it with $k$ cops \emph{monotonely}. 
The equivalence in the strength of non-monotone and monotone strategies in the classical Cops and Robber game is obtained either from knowledge of 
the obstructing notion for the respective width parameter, such as \emph{brambles} being the obstructions for small treewidth~\cite{seymour1993graph}, 
or by arguments making use of the \emph{submodularity} of an appropriate connectivity function (whose definition we omit), such as the size of the \emph{border} $\theta X$, 
the set of vertices in a vertex set $X$ adjacent to the complement of $X$ (for example, see~\cite{bienstock1991quickly,BIENSTOCK1991239}).

It is a fundamental question to see if non-monotone winning strategies for the cop players in the $i$-Cops and Robber game are stronger than monotone ones, 
unlike the case for $i=1$. However, we have no access yet to obstructing notions of $i$-medianwith of $i$-latticewidth, whose presence might provide certificates for winning strategies for the robber, 
as in the case of treewidth and pathwidth. To this end, we also don't know if the notion of submodularity can be properly adjusted to provide similar results for any $i>1$, 
in the fashion it does for $i=1$.

\bibliographystyle{abbrv}
\bibliography{CopsRobbers} 

\end{document}